\newcommand\mypagesizel{
\textwidth= 6.5in
\textheight=9in
\voffset-.55in
\hoffset -0.75in
\marginparwidth=56pt
}
\newcommand{\p}[0]{{\mathbb P}}
\renewcommand{\phi}{\varphi}
\newcommand{\sF}{\mathscr{F}}
\newcommand{\sO}{\mathscr{O}}
\newtheorem{thm}{Theorem}[section]
\newtheorem{lemma}[thm]{Lemma}
\newtheorem{prop}[thm]{Proposition}
\newtheorem*{thm*}{Theorem}
\theoremstyle{definition}
\newtheorem{const}[thm]{Construction}
\newtheorem{defn-thm}[thm]{Definition-Theorem} 
\newtheorem{defn-lemma}[thm]{Definition-Lemma}
\theoremstyle{remark}
\newtheorem{rem}[thm]{Remark}
\newtheorem*{not-and-def}{Notation and definitions}
\newtheorem{exmp}[thm]{Example}
\numberwithin{equation}{section}
\begin{document}

\title[]{SINGULAR RATIONALLY CONNECTED SURFACES WITH NON-ZERO PLURI-FORMS}

\author{Wenhao OU} 

\address{Wenhao OU: Institut Fourier, UMR 5582 du
  CNRS, Universit\'e Grenoble 1, BP 74, 38402 Saint Martin
  d'H\`eres, France} 

\email{}

\subjclass[2010]{}

\begin{abstract}
Let $X$ be a projective rationally connected surface with canonical singularities carrying a non-zero reflexive pluri-forms, \textit{i.e.} the reflexive hull of $(\Omega_X^1)^{\otimes m}$ has a non-zero global section for some positive integer $m$. We show that any such surface X can be obtained from a rational ruled surface by a very explicit sequence of blow-ups and blow-downs. Moreover, we interpret the existence of non-zero pluri-forms in terms of semistable reduction.
\end{abstract}

\maketitle

\tableofcontents

\section{Introduction}
\label{Introduction}

Recall that a projective variety $X$ is said to be rationally connected if for any two general points in $X$, there
exists a rational curve passing through them, see \cite[Def. 3.2 and Prop. 3.6]{Kol96}. It is known that for a smooth
projective rationally connected variety $X$, $H^0(X,$  $(\Omega_X^1)^{\otimes m})=\{0\}$ for $m>0$, see
\cite[Cor. IV.3.8]{Kol96}. In \cite[Thm. 5.1]{GKKP11}, it is shown that if a pair $(X,D)$ is klt and $X$ is rationally
connected, then $H^0(X,\Omega_X^{[m]})=\{0\}$ for $m>0$, where $\Omega_X^{[m]}$ is the reflexive hull of $\Omega_X^m$.
By \cite[Thm. 3.3]{GKP12}, if $X$ is factorial, rationally connected and with canonical
singularities, then $H^0(X,(\Omega_X^1)^{[\otimes m]}) = \{0\}$ for $m>0$, where $(\Omega_X^1)^{[\otimes m]}$ is the reflexive hull of
$(\Omega_X^1)^{\otimes m}$. However, this is not true without the assumption of being factorial, see \cite[Example 3.7]{GKP12}. In this paper, our aim is to classify rationally connected surfaces with
canonical singularities which have non-zero reflexive pluri-forms. We will give two method to construct such surfaces (see Construction \ref{main-const} and Construction \ref{const-quo-thm}) and we will also prove that every such surface can be constructed by both of these methods (see Theorem \ref{constr-thm} and Theorem \ref{quo-thm}). This gives an affirmative answer to  \cite[Remark and Question 3.8]{GKP12}

The following example is the one given in \cite[Example 3.7]{GKP12}.

\begin{exmp} 
\label{first-exa}
Let $\pi ' : X' \to \mathbb{P}^1$ be any smooth ruled surface. Choose four distinct points $q_1$, $q_2$, $q_3$, $q_4$ in $\mathbb{P}^1$. For each point $q_i$, perform the following sequence of birational
transformations of the ruled surface:

\begin{enumerate}
\item[(i)] Blow up a point $x_i$ in the fibre over $q_i$. Then we get two $(-1)$-curves which meet transversely at
$x_i'$.
\item[(ii)] Blow up the point $x_i'$. Over $q_i$, we get two disjoint $(-2)$-curves and one $(-1)$-curve. The
$(-1)$-curve appears in the fibre with multiplicity two.
\item[(iii)] Blow down the two $(-2)$-curves. We get two singular points on the fibre, each of them is of type $A_1$.
\end{enumerate}
In the end, we get a rationally connected surface $\pi : X\to \mathbb{P}^1$ with canonical singularities such that
$H^0(X,(\Omega_X^1)^{[\otimes 2]}) \neq \{0\}$.
\end{exmp}

We will prove that every projective rationally connected surface $X$ with canonical singularities and having
non-zero pluri-forms can be constructed by a similar method (see Construction \ref{main-const} below) from a smooth ruled surface over $\mathbb{P}^1$. 

\begin{const}
\label{main-const}
Take a smooth ruled surface $ X_0 \overset{\pi_0}{\longrightarrow} \mathbb{P}^1$ and choose distinct points $q_1,..., q_r $ in
$\mathbb{P}^1$ with $r \geqslant 4$. We perform a sequence of birational transformations as follows.
\begin{enumerate}
\item[(i)] For each $q_i$, perform the same sequence of birational transformations as in Example \ref{first-exa}. We
get a fibre surface $\pi_1 : X_1 \to \mathbb{P}^1$. The non-reduced fibres of $\pi_1$ are $\pi_1^*q_1,...,\pi_1^*q_r$.
\item[iii)] Perform finitely many times this birational transformation: blow up a smooth point on a non-reduced
fibre and then blow down the strict transform of the initial fibre. We obtain another fibre surface $p : X_f \to \mathbb{P}^1$ (see Lemma \ref{contract-1}).
\item[(iii)] Starting from $X_f$, perform a sequence of blow-ups of smooth points, we get a surface $X_{a}$.
\item[(iv)] Blow down some chains of exceptional $(-2)$-curves for $X_a \to X_{f}$ (this is always possible, see \S \ref{Proof of Theorem 1.3}), we obtain a rational surface $X$.
\end{enumerate}
\end{const}

\centerline{
\xymatrix{
X_a \ar[r] \ar[rd]_{\mathrm{blow-ups}} & X \ar[d]^f \\
 & X_f \ar@{-->}[r] \ar[rd]_p & X_1 \ar[r] \ar[d]^{\pi_1} & X_0 \ar[ld]^{\pi_0}\\
 & & \p^1
}
}

\begin{thm}
\label{constr-thm}
The surface obtained by Construction \ref{main-const} is a rationally connected surface which carries non-zero pluri-forms. Conversely, if $X$ is a projective rationally connected surface with canonical singularities such that $H^0(X, (\Omega^1_X)^{[\otimes m]})
\neq \{0\}$ for some $m>0$, then $X$ can be constructed by the method described in Construction \ref{main-const}.
\end{thm}

Note that if $X$ is a rational surface obtained by Construction \ref{main-const}, then there is a fibration $\pi:X \to \mathbb{P}^1$ induced by $\pi_0$. This fibration has multiple fibres  over the points $q_1,..., q_r$ that we have chosen at the beginning of the construction. In fact, these multiple fibres are exactly the source of non-zero forms on $X$ by the theorem below.

\begin{thm}
\label{iso-pluri-forms}
Let $X$ be a projective rationally connected surface with canonical singularities and having non-zero  pluri-forms. If $X_f$ is the result of a MMP, then $X_f$ is a Mori fibre space over $\mathbb{P}^1$. Let $p:X_f \to \mathbb{P}^1$ be the fibration. If $r$ is the number of points over which $p$ has non-reduced fibres, then we have $r \geqslant 4$
and 
\begin{center}
$H^0(X,(\Omega_{X}^1)^{[\otimes m]}) \cong H^0(X_f,(\Omega_{X_f}^1)^{[\otimes m]}) \cong H^0(\mathbb{P}^1,
\mathscr{O}_{\mathbb{P}^1}(-2m+[\dfrac{m}{2}]r))$
\end{center} 
for $m>0$. In particular, for fixed $m$, the number of $m$-pluri-forms depends only on the number of multiple fibres.
\end{thm}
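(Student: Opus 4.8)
The plan is to move the question to the Mori fibre space $X_f$ by a birational-invariance argument, and then to compute $H^0(X_f,(\Omega_{X_f}^1)^{[\otimes m]})$ by relative methods over $\p^1$.

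\emph{Reduction to $X_f$.} I would first show that $H^0(-,(\Omega^1)^{[\otimes m]})$ is unchanged by the MMP. For a resolution $\mu\colon\widetilde X\to X$ one has $H^0(X,(\Omega_X^1)^{[\otimes m]})\cong H^0(\widetilde X,(\Omega_{\widetilde X}^1)^{\otimes m})$: a reflexive $m$-tensor on $\widetilde X$ restricts to $X_{\mathrm{sm}}$ and extends reflexively over the codimension-two singular set, while conversely a reflexive $m$-tensor on $X$ extends over $\Exc(\mu)$ because $X$ has Du Val singularities and $\mu$ is crepant — this is the extension property for reflexive tensor differentials, which one checks on the crepant resolutions of the Du Val singularities. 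Since tensor differentials are birational invariants of smooth surfaces, resolving the graph of $X\map X_f$ gives $H^0(X,(\Omega_X^1)^{[\otimes m]})\cong H^0(X_f,(\Omega_{X_f}^1)^{[\otimes m]})$. As $X$ is rationally connected it is uniruled, so $K_X$ is not pseudo-effective and the MMP ends with a Mori fibre space $p\colon X_f\to B$, with $X_f$ having canonical singularities. Then $X_f$, hence $B$, is rationally connected, so $B=\p^1$ or a point; the latter is impossible, since the minimal resolution of $X_f$ would then be a smooth rational surface, forcing $H^0(X_f,(\Omega_{X_f}^1)^{[\otimes m]})=\{0\}$. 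Thus $p\colon X_f\to\p^1$ is a conic bundle.

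\emph{Structure of $p$ and descent of the pluri-forms.} A general fibre is a $\p^1$ of $K_{X_f}$-degree $-2$; since $X_f$ is Gorenstein with $\rho(X_f/\p^1)=1$ and $-K_{X_f}$ $p$-ample, any non-reduced fibre must be a double fibre, so the non-reduced fibres are exactly $p^{*}q_i=2C_i$, $i=1,\dots,r$, with $X_f$ smooth at the generic point of each $C_i$. The crucial claim is that every reflexive $m$-tensor on $X_f$ descends from the base. Restricting $0\to p^{*}\Omega^1_{\p^1}\to\Omega_{X_f}^1\to\Omega^1_{X_f/\p^1}\to 0$ to a general fibre $F$ gives $0\to\mathcal O_F\to\Omega_{X_f}^1|_F\to\mathcal O_{\p^1}(-2)\to 0$, so $(\Omega_{X_f}^1|_F)^{\otimes m}$ carries a filtration with graded pieces $\mathcal O_{\p^1}(-2k)^{\oplus\binom mk}$ for $0\le k\le m$; only $k=0$ has sections, hence $H^0(F,(\Omega_{X_f}^1|_F)^{\otimes m})=H^0(F,(p^{*}\Omega^1_{\p^1})^{\otimes m}|_F)$. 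Therefore any global reflexive $m$-tensor takes values, along a general fibre, in the line subbundle $(p^{*}\Omega^1_{\p^1})^{\otimes m}$, hence lies in its saturation $\mathcal S$ inside $(\Omega_{X_f}^1)^{[\otimes m]}$, so $H^0(X_f,(\Omega_{X_f}^1)^{[\otimes m]})=H^0(X_f,\mathcal S)$. At a general point of $C_i$ one has $p^{*}t=y^2\cdot(\text{unit})$ for $y$ a local equation of $C_i$, so $p^{*}(dt)^{\otimes m}=y^m\cdot(\text{a non-vanishing }m\text{-tensor})$, and consequently $\mathcal S\cong p^{*}\mathcal O_{\p^1}(-2m)\otimes\mathcal O_{X_f}(m\sum_iC_i)$ as reflexive sheaves.

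\emph{Computing $p_{*}\mathcal S$ and the bound $r\ge 4$.} Writing $m=2k+\epsilon$ with $\epsilon\in\{0,1\}$, so $k=[\tfrac m2]$, and using $2C_i=p^{*}q_i$, one gets $\mathcal O_{X_f}(m\sum_iC_i)\cong p^{*}\mathcal O_{\p^1}(kr)\otimes\mathcal O_{X_f}(\epsilon\sum_iC_i)$. A section of $\mathcal O_{X_f}(\sum_iC_i)$ is constant on a general fibre, hence pulled back from $\p^1$, and the at-most-simple-pole condition along the $C_i$ then forces it to be regular; so $p_{*}\mathcal O_{X_f}(\sum_iC_i)=\mathcal O_{\p^1}$, and by the projection formula $p_{*}\mathcal S=\mathcal O_{\p^1}(-2m+[\tfrac m2]r)$. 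Combining everything, $H^0(X,(\Omega_X^1)^{[\otimes m]})\cong H^0(X_f,(\Omega_{X_f}^1)^{[\otimes m]})\cong H^0(\p^1,\mathcal O_{\p^1}(-2m+[\tfrac m2]r))$. Finally, this group is nonzero for some $m_0>0$, so $[\tfrac{m_0}2]\,r\ge 2m_0$; this fails for $m_0=1$, and for $m_0\ge2$ it yields $r\ge 2m_0/[\tfrac{m_0}2]\ge 4$.

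The hard part will be the reduction step: proving that reflexive \emph{tensor} differentials extend across the exceptional locus of a crepant resolution of a canonical surface singularity (so that the reflexive pluri-genera are genuinely preserved by the MMP), and verifying that the MMP output is a conic bundle over $\p^1$ with canonical singularities whose only non-reduced fibres are the double fibres $2C_i$. Once these structural facts are in place, the descent of pluri-forms to the base and the pushforward computation are routine.
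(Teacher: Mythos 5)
Your reduction step contains a fatal error. You claim that a reflexive $m$-tensor on $X$ extends to a regular $m$-tensor across the exceptional locus of the (crepant) minimal resolution $\mu\colon\widetilde X\to X$, so that $H^0(X,(\Omega_X^1)^{[\otimes m]})\cong H^0(\widetilde X,(\Omega_{\widetilde X}^1)^{\otimes m})$. This is false, and its falsity is the whole point of the theorem: $\widetilde X$ is a smooth projective rationally connected surface, so $H^0(\widetilde X,(\Omega_{\widetilde X}^1)^{\otimes m})=\{0\}$ by Koll\'ar's vanishing theorem quoted in the introduction, and your isomorphism would force $H^0(X,(\Omega_X^1)^{[\otimes m]})=\{0\}$, contradicting the hypothesis. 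Concretely, for the $A_1$ singularity $X=\mathbb{C}^2/\{\pm1\}$ the invariant tensor $dx\otimes dx$ descends to a section of $(\Omega_X^1)^{[\otimes 2]}$ (the quotient map is \'etale off the origin), but in the chart of the minimal resolution with coordinates $(u,t)=(x^2,\,y/x)$ one has $dx\otimes dx=\tfrac{1}{4u}\,du\otimes du$, which has a pole along the exceptional $(-2)$-curve. The extension theorem of Greb--Kebekus--Kov\'acs--Peternell applies to reflexive \emph{differential forms} $\Omega_X^{[p]}$, not to reflexive tensor powers $(\Omega_X^1)^{[\otimes m]}$.

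What survives of your reduction is only the easy injection $H^0(X,(\Omega_X^1)^{[\otimes m]})\hookrightarrow H^0(X_f,(\Omega_{X_f}^1)^{[\otimes m]})$: each divisorial contraction $X_i\to X_{i+1}$ is an isomorphism over the complement of a single point, and one extends over that point by reflexivity. The genuinely hard half of the theorem is surjectivity, i.e.\ that the sections of $(\Omega_{X_f}^1)^{[\otimes m]}$ --- which by your own (correct) fibration analysis lie in $((p^{*}\Omega^1_{\mathbb{P}^1})\otimes\mathscr O_{X_f}(R))^{[\otimes m]}$ --- pull back to $X$ without acquiring poles along the exceptional divisors of $X\to X_f$. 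The paper handles this by showing (i) every divisorial extremal contraction of a canonical surface contracts its divisor to a \emph{smooth} point, so all the blow-ups being reversed take place over $X_{f,ns}$, and (ii) consequently every irreducible component of $\pi^{*}z_i$ (where $\pi\colon X\to\mathbb{P}^1$) over the $r$ marked points still has multiplicity at least $2$, so the ramification divisor of $\pi$ is large enough to absorb the poles of the pulled-back sections. Your computation of the pushforward on $X_f$ and the deduction $r\geqslant 4$ agree with the paper and are fine, but without a replacement for the reduction step the theorem is not proved.
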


We note that both in Theorem \ref{iso-pluri-forms} and in Construction \ref{main-const}, we meet a
surface named $X_f$. We will see later (in the proof of Theorem \ref{constr-thm} in \S \ref{Proof of Theorem 1.3}) that, by choosing a good MMP, these two surfaces are identical. The points $q_1,...,q_r$ are exactly the points over which $p:X_f\to \p^1$ has multiple fibres. By the semistable reduction, we can find a Galois cover $\gamma: E\to \p^1$ with $E$ such that $Z \to E$ has only reduced fibres, where $Z$ is the normalisation of $X_f\times_{\p^1} E$. Let $Y$ be the normalisation of $X\times_{\p^1} E$. The following theorem shows that we can always choose a finite Galois cover $\gamma$ which has degree $4$ and the pluri-forms on $X$ are exactly the $G$-invariant pluri-forms on $Y$, where $G$ is the Galois group of $\gamma$.

\begin{thm}
\label{quo-thm}
Let $X$ be a projective rationally connected surface with canonical singularities and having non-zero pluri-forms. Let $\pi$ be the composition of $X\to X_f \to \p^1$. Then there is a commutative diagram 

\centerline{
\xymatrix{
Y \ar[rr]^{\Gamma}_{4:1 \ \mathrm{cover}}\ar[d]_{\pi'} & &  X\ar[d]^{\pi} \\
E \ar[rr]^{\gamma}_{4:1  \ \mathrm{cover}} & & \mathbb{P}^1
}
}
\noindent such that  $E$ is a smooth curve of positive genus and $Y$ is a projective surface with canonical singularities. Both $\gamma$ and $\Gamma$ are Galois covers with Galois group $G:=\mathbb{Z}/2\mathbb{Z} \times \mathbb{Z}/2\mathbb{Z}$ and $\Gamma$ is \'etale in codimension $1$. Moreover, for all $m\geqslant 0$, we have $H^0(X,(\Omega_{X}^1)^{[\otimes m]}) \cong H^0(Y,(\Omega_{Y}^1)^{[\otimes m]})^G \cong H^0(E,(\Omega_{E}^1)^{\otimes m})^G.$
\end{thm}

Note that $Y$ is not rationally connected since $E$ is not rationally connected. This theorem shows that every projective rationally connected surface with canonical singularities which has non-zero pluri-forms can be constructed by the following method.

\begin{const}
\label{const-quo-thm}
Let $Y$ be a projective surface with canonical singularities and let $G$ be a finite subgroup of Aut$(Y)$ whose action is \'etale in codimension 1. Assume that there is a $G$-invariant fibration $\pi'$ from $Y$ to a smooth curve $E$ of positive genus such that $E/G = \mathbb{P}^1$ and that general fibres of $\pi'$ are smooth rational curves. Let $X=Y/G$. Then $X$ is rationally connected (see \cite[Thm. 1.1]{GHS03}) and $H^0(X, (\Omega_X^1)^{[\otimes m]}) \neq \{0\}$ for some $m>0$. 
\end{const}

\section{Notation and outline of paper}

Throughout this paper, we will work over $\mathbb{C}$, the field of complex numbers. Unless otherwise specified, every
variety is an integral $\mathbb{C}$-scheme of finite type. A curve is a variety of dimension 1 and a surface is a
variety of dimension 2. For a variety $X$, we denote the sheaf of K\"{a}hler differentials by $\Omega_{X}^1$. Denote
$\bigwedge^p\Omega_{X}^1$ by $\Omega_{X}^p$ for $p\in \mathbb{N}$.

For a coherent sheaf $\mathscr{F}$ on a variety $X$, we denote by $\mathscr{F}^{**}$ the reflexive hull of
$\mathscr{F}$. There is an important property for reflexive sheaves.

\begin{lemma}[{\cite[Prop. 1.6]{Har80}}]
\label{refle-cod-2}
Let $\mathscr{F}$ be a coherent sheaf on a normal variety $V$. Then $\mathscr{F}$ is reflexive
if and only if $\mathscr{F}$ is torsion-free and for each open $U \subseteq X$ and each closed subset $Y\subseteq U$ of
codimension at least 2, $\mathscr{F}(U) \cong j_*\mathscr{F}(U\setminus Y)$, where $j:U\setminus Y \to U$ is the
inclusion map.
\end{lemma}

Let $V$ be a normal variety and let $V_{0}$ be its smooth locus. We denote a canonical divisor by $K_V$. Moreover, let
$\Omega_{V}^{[p]}$ (resp. $(\Omega_{V}^1)^{[\otimes p]}$) be the reflexive hull of $\Omega_{V}^p$ (resp.
$(\Omega_{V}^1)^{\otimes p}$). By Lemma \ref{refle-cod-2}, it is the push-forward of the locally free sheaf
$\Omega_{V_{0}}^p$ (resp. $(\Omega_{V_{0}}^1)^{\otimes p}$) to $V$ since $V$ is smooth in codimension 1.

Let $S$ be a normal surface. A smooth rational curve $C$ in $S$ is a $(-k)$-curve if $S$ is smooth along $C$ and the intersection number $C\cdot C =-k$. A projective birational morphism $r : \widetilde{S} \to S$ is called the minimal resolution of singularities (or minimal resolution for short) if $\widetilde{S}$ is smooth and $K_{\widetilde{S}}$ is $r$-nef. There is a unique minimal resolution of singularities for a normal surface and any resolution of singularities factors through the minimal resolution.

Let $S$ be a normal surface and let $r : \widetilde{S} \to S$ be the minimal resolution of singularities of $S$. We say that $S$ has \textit{canonical singularities} if the intersection number $K_{\widetilde{S}} \cdot C$ is zero for every $r$-exceptional curve $C$. Canonical surface singularities are also called Du Val singularities. We know all of these singularities, they are $A_i$, $D_j$, $E_k$ where $i \geqslant 1$, $j \geqslant 3$ and $k=6,7,8$. For more details on Du Val singularities, see \cite[\S  4.1]{KM98}.

Let $p: V \to B$ be a fibration from a normal variety to a smooth curve. If the non-reduced fibres of $p$ are
$p^*z_1$,..., $p^*z_r$, then  the \textit{ramification divisor}  of $p$ is the divisor defined by $$R=p^*z_1+ \cdots +p^*z_r - \mathrm{Supp}\, (p^*z_1+ \cdots +p^*z_r).$$

Let $X$ be a projective rationally connected surface with canonical singularities which carries non-zero pluri-forms. Then we can run a minimal model program for $X$ (for more details on MMP, see \cite[\S 1.4 and \S 3.7]{KM98}). We obtain a sequence of divisorial contractions  $$X=X_0 \to X_1 \to \cdots \to X_n.$$ Since $K_X$ is not pseudo-effective, neither is $K_{X_n}$. Thus $X_n$ is a Mori fibre space. we have a Mori fibration $p : X_n \to B$. Therefore we have two possibilities: either $\mathrm{dim}\, B=0$ or $\mathrm{dim}\,B=1$. If $\mathrm{dim}\,B=0$, then $X_n$ is a Fano variety with Picard number $1$. Here, a Fano variety is a normal projective variety whose anti-canonical divisor is an ample $\mathbb{Q}$-Cartier divisor. In \S 2, we will prove that $X$ can not have any non-zero pluri-form in this case. Hence we only need to deal with the case that $\mathrm{dim}\,B=1$. In \S 3, we will study some properties for Mori fibre surfaces over a curve. In the last three sections, we will prove Theorem \ref{iso-pluri-forms}, \ref{constr-thm} and \ref{quo-thm} in this order.

\section{Vanishing theorem for Fano varieties with Picard number 1}
\label{Vanishing theorem for Fano varieties with Picard number 1}

The aim of this section is to prove the following theorem.

\begin{thm}
\label{fib-over-curve}
Let $V$ be a $\mathbb{Q}$-factorial klt Fano variety  with Picard number $1$. Then $H^0(V,(\Omega_{V}^1)^{[\otimes m]})=\{0\}$
for any $m>0$.
\end{thm}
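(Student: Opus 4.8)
The plan is to adapt Kollár's proof that a smooth rationally connected variety carries no non-zero pluri-tensor-forms (\cite[Cor. IV.3.8]{Kol96}) to the singular case, by first passing to the smooth locus and then sweeping out by very free rational curves. Since $V$ is klt it is normal, so $V\setminus V_{ns}$ has codimension at least $2$; by Proposition \ref{refle-cod-2}, applied to the reflexive sheaf $(\Omega_V^1)^{[\otimes m]}$, the restriction map $H^0\bigl(V,(\Omega_V^1)^{[\otimes m]}\bigr)\to H^0\bigl(V_{ns},(\Omega_{V_{ns}}^1)^{\otimes m}\bigr)$ is an isomorphism. Hence a non-zero reflexive pluri-form would give a non-zero section $\omega$ of the vector bundle $(\Omega_{V_{ns}}^1)^{\otimes m}$ on the smooth quasi-projective variety $V_{ns}$, whose non-vanishing locus $U\subseteq V_{ns}$ is then a non-empty Zariski open set; I will derive a contradiction from the existence of such an $\omega$.

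\textbf{A very free rational curve inside $V_{ns}$.} A $\mathbb{Q}$-factorial klt Fano variety is rationally connected, by the theorems of Zhang and of Hacon--McKernan; in particular very free rational curves exist and, through general points of $V$, sweep out $V$. Since $V\setminus V_{ns}$ has codimension $\ge 2$, these curves can be placed in general position, and one gets a morphism $f\colon\mathbb{P}^1\to V$ with $f(\mathbb{P}^1)\subseteq V_{ns}$, with $f^{-1}(U)\ne\emptyset$, and with $f$ very free, i.e. $f^*T_V\cong\bigoplus_{i=1}^{\dim V}\mathcal{O}_{\mathbb{P}^1}(a_i)$ with all $a_i\ge 1$, where $T_V=(\Omega_V^1)^\vee$ restricts on $V_{ns}$ to the ordinary tangent bundle.

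\textbf{Contradiction.} On the one hand, $f^*\omega$ is a global section of $(f^*\Omega_{V_{ns}}^1)^{\otimes m}\cong\bigl(\bigoplus_i\mathcal{O}_{\mathbb{P}^1}(-a_i)\bigr)^{\otimes m}$, a direct sum of line bundles of degree at most $-m<0$, which has no non-zero global section; hence $f^*\omega=0$. On the other hand, for a point $p\in f^{-1}(U)$ the fibre of $f^*(\Omega_{V_{ns}}^1)^{\otimes m}$ at $p$ is canonically $(\Omega_{V_{ns}}^1)^{\otimes m}\otimes k(f(p))$, in which the value of $f^*\omega$ is $\omega(f(p))\ne 0$; hence $f^*\omega\ne 0$. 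This contradiction shows $\omega=0$, which proves the theorem.

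\textbf{The main difficulty.} The last two paragraphs are the classical mechanism; the only substantive input is the rational connectedness of klt Fano varieties. The step needing genuine care is the construction of $f$: one must at once keep the very free curve inside $V_{ns}$, make it meet the prescribed open set $U$, and preserve very freeness — this is where the hypotheses are used (klt forces $V\setminus V_{ns}$ to have codimension $\ge 2$, and together with $\mathbb{Q}$-factoriality and the Fano condition this lets the standard deformation theory of rational curves apply). In the case eventually needed here, $V$ is a surface, hence a del Pezzo surface with quotient singularities, which is rational, so this step is elementary.
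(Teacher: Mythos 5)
There is a genuine gap, and it sits exactly where you flag ``the main difficulty'': the existence of a \emph{very free} rational curve contained in $V_{ns}$ and meeting $U$. Your argument up to that point uses only that $V$ is klt and rationally connected, and under those hypotheses alone the conclusion is \emph{false} --- the entire paper is about rationally connected surfaces with canonical (hence klt) singularities carrying non-zero reflexive pluri-forms. Example \ref{first-exa} makes the failure concrete: there the singular points lie on the multiple fibers of $\pi\colon X\to\mathbb{P}^1$, and a rational curve $C\subseteq X_{ns}$ dominating $\mathbb{P}^1$ would induce $g\colon\mathbb{P}^1\to\mathbb{P}^1$ of degree $d$ with $g^*q_i$ divisible by $2$ for all $r\geqslant 4$ special points (since $\pi^*q_i=2R_i$ near smooth points); Riemann--Hurwitz then gives $-2\geqslant -2d+ rd/2\geqslant 0$, a contradiction. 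So every rational curve in $X_{ns}$ lies in a fiber and there is no very free curve in the smooth locus, even though $X\setminus X_{ns}$ has codimension $2$. The standard lemma ``deformations of a free curve avoid a prescribed codimension-$2$ set'' lives on a \emph{smooth} ambient variety and does not let you push very free curves off the singular locus of a singular one. Consequently your proof must use $\rho(V)=1$ in an essential way at this step, and it does not; even in the surface case the statement you call elementary (the smooth locus of a Picard-number-one log del Pezzo surface is rationally connected by curves in the smooth locus) is the theorem of Keel and McKernan, a genuinely hard result, and in higher dimension the analogous statement is not available off the shelf.

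For contrast, the paper avoids rational curves in $V_{ns}$ altogether: a non-zero section of $(\Omega_V^1)^{[\otimes m]}$ forces $\mu_H^{max}(\Omega_V^{[1]})\geqslant 0$ by additivity of maximal slopes (Proposition \ref{decomp-slope}), hence a saturated subsheaf $\mathscr{F}\subseteq\Omega_V^{[1]}$ of rank $<\dim V$ with $\mu_H(\mathscr{F})\geqslant 0$. If the slope is positive, $\rho(V)=1$ makes $\det\mathscr{F}$ ample, contradicting Bogomolov--Sommese vanishing \cite[Thm. 7.2]{GKKP11}; if it is zero, $\det\mathscr{F}$ is torsion by \cite[Lem. 2.6]{AD12} (this is where $\mathbb{Q}$-factorial klt is used), and the associated cyclic cover $Z$ is again klt Fano, hence rationally connected, with $\mathscr{O}_Z\hookrightarrow\Omega_Z^{[p]}$, contradicting \cite[Thm. 5.1]{GKKP11}. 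Note that the final contradiction is against the vanishing of reflexive \emph{exterior} $p$-forms on rationally connected klt spaces --- a statement that, unlike the one for tensor powers, survives the singularities. If you want to salvage your approach you would have to prove the very-free-curve-in-the-smooth-locus statement for $\mathbb{Q}$-factorial klt Fano varieties of Picard number one, which is a substantial theorem in its own right, not a routine general-position argument.
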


Before proving the theorem, we recall the notion of slopes. Let $V$ be a normal projective
$\mathbb{Q}$-factorial variety of dimension $d$. Let $A$ be an ample divisor in $V$. Then for a coherent sheaf  
$\mathscr{F}$, we can define $\mu_A(\mathscr{F})$ the slope of $\mathscr{F}$ with respect to $A$ by
\begin{displaymath}
\mu_A(\mathscr{F}) := \frac{\mathrm{det}\, \mathscr{F} \cdot A^{d-1}}{\mathrm{rank}\, \mathscr{F}},
\end{displaymath}  
where $\mathrm{det}\, \mathscr{F}$ is the reflexive hull of $\bigwedge^{\mathrm{rank}\, \mathscr{F}}\mathscr{F}$. Moreover, let 
\begin{center}
$\mu^{max}_A(\mathscr{F}) = \mathrm{sup}\{\mu_A(\mathscr{G}) \ |\ \mathscr{G} \subseteq \mathscr{F} \ \mathrm{a \ 
coherent \ subsheaf}\}$.
\end{center}

For any coherent sheaf $\mathscr{F}$, there is a saturated coherent subsheaf $\mathscr{G} \subseteq \mathscr{F}$ such
that  $\mu^{max}_A(\mathscr{F}) = \mu_A(\mathscr{G})$, see \cite[Prop. A.2]{GKP12}. 

\begin{prop}
\label{decomp-slope}
Let $V$ be a  projective normal variety which is $\mathbb{Q}$-factorial, let $H$ be an ample divisor in $V$. Then
for any  two coherent sheaves $\mathscr{F}$ and $\mathscr{G}$ on $V$,
\begin{center}
$\mu^{max}_H((\mathscr{F}\otimes \mathscr{G})^{**}) = \mu^{max}_H(\mathscr{F}) + \mu^{max}_H(\mathscr{G})$.
\end{center}
\end{prop}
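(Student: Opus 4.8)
The plan is to prove the two inequalities ``$\ge$'' and ``$\le$'' separately, writing $\mu=\mu_H$ throughout. Because $V$ is normal and $\bQ$-factorial, slopes, maximal destabilizing slopes $\mu^{max}_H$ and Harder--Narasimhan filtrations of torsion-free coherent sheaves are well defined (cf.\ \cite{MP97}), and each of these depends only on the behaviour of the sheaf away from a closed set of codimension $\ge 2$: determinants are divisor classes, and the intersection numbers entering $\mu$ are unchanged by deleting such a set. I will therefore feel free to replace $V$ by a big open subset $U$ over which $\mathscr{F}$, $\mathscr{G}$, $(\mathscr{F}\otimes\mathscr{G})^{**}$ and all the subquotients appearing below are locally free and all the exact sequences below are exact; likewise, passing to a reflexive hull changes neither the slope nor, up to codimension $\ge 2$, the subsheaves, so the operation $(-)^{**}$ can be inserted or suppressed at will.

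For the inequality ``$\ge$'', I would pick, using \cite[Prop. III.2.4]{MP97}, saturated subsheaves $\mathscr{F}'\subseteq\mathscr{F}$ and $\mathscr{G}'\subseteq\mathscr{G}$ with $\mu(\mathscr{F}')=\mu^{max}_H(\mathscr{F})$ and $\mu(\mathscr{G}')=\mu^{max}_H(\mathscr{G})$; over $U$ they are subbundles. The induced map $(\mathscr{F}'\otimes\mathscr{G}')^{**}\to(\mathscr{F}\otimes\mathscr{G})^{**}$ is generically injective between reflexive sheaves, hence injective, and so realizes $(\mathscr{F}'\otimes\mathscr{G}')^{**}$ as a coherent subsheaf of $(\mathscr{F}\otimes\mathscr{G})^{**}$. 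Since $c_1$ is additive and rank multiplicative under tensor products of vector bundles, $\mu\big((\mathscr{F}'\otimes\mathscr{G}')^{**}\big)=\mu(\mathscr{F}')+\mu(\mathscr{G}')=\mu^{max}_H(\mathscr{F})+\mu^{max}_H(\mathscr{G})$, giving ``$\ge$''.

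For the reverse inequality I would use the Harder--Narasimhan filtrations $0=\mathscr{F}_0\subset\cdots\subset\mathscr{F}_s=\mathscr{F}$ and $0=\mathscr{G}_0\subset\cdots\subset\mathscr{G}_t=\mathscr{G}$, with semistable graded pieces of slopes $\mu^{max}_H(\mathscr{F})=\mu^F_1>\cdots>\mu^F_s$ and $\mu^{max}_H(\mathscr{G})=\mu^G_1>\cdots>\mu^G_t$. Over $U$ these are filtrations by subbundles, so $(\mathscr{F}\otimes\mathscr{G})^{**}$ carries the product filtration with term $\mathscr{K}_k$ the saturation of $\sum_{i+j\le k}\mathrm{Im}\big((\mathscr{F}_i\otimes\mathscr{G}_j)^{**}\to(\mathscr{F}\otimes\mathscr{G})^{**}\big)$, and its $k$-th graded piece $\mathscr{K}_k/\mathscr{K}_{k-1}$ agrees off codimension $\ge 2$ with $\bigoplus_{i+j=k}(\mathscr{F}_i/\mathscr{F}_{i-1})\otimes(\mathscr{G}_j/\mathscr{G}_{j-1})$. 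Each summand is a tensor product of two semistable sheaves, hence --- and this is the crucial point, valid because we work over $\c$ --- again semistable, of slope $\mu^F_i+\mu^G_j\le\mu^{max}_H(\mathscr{F})+\mu^{max}_H(\mathscr{G})=:c$; since $\mu^{max}_H$ of a direct sum equals the maximum of the $\mu^{max}_H$ of the summands, $\mu^{max}_H(\mathscr{K}_k/\mathscr{K}_{k-1})\le c$ for all $k$. Finally, letting $\mathscr{A}\subseteq(\mathscr{F}\otimes\mathscr{G})^{**}$ be saturated with $\mu(\mathscr{A})=\mu^{max}_H\big((\mathscr{F}\otimes\mathscr{G})^{**}\big)$ (so $\mathscr{A}$ is semistable) and choosing the largest $k$ with $\mathscr{A}\not\subseteq\mathscr{K}_{k-1}$, the composite $\mathscr{A}\hookrightarrow\mathscr{K}_k\twoheadrightarrow\mathscr{K}_k/\mathscr{K}_{k-1}$ is nonzero; the nonzero torsion-free quotient $\mathscr{A}/(\mathscr{A}\cap\mathscr{K}_{k-1})$ of the semistable $\mathscr{A}$ then has slope $\ge\mu(\mathscr{A})$ and embeds into $\mathscr{K}_k/\mathscr{K}_{k-1}$, so $\mu(\mathscr{A})\le\mu^{max}_H(\mathscr{K}_k/\mathscr{K}_{k-1})\le c$.

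The main obstacle is the input used above that the tensor product of two $\mu$-semistable torsion-free sheaves is again $\mu$-semistable: this fails in positive characteristic, and over $\c$ the standard proof reduces it, via the Mehta--Ramanathan restriction theorem, to the case of a smooth projective curve, where (after a twist to get to degree zero) it comes from the Narasimhan--Seshadri correspondence together with the fact that tensor products of unitary representations are unitary. The rest is routine: organizing slopes on the $\bQ$-factorial normal variety $V$, checking the insensitivity of slopes to reflexive hulls and to big open subsets, identifying the associated graded of a product filtration, and the subadditivity $\mu^{max}_H(B)\le\max\{\mu^{max}_H(A),\mu^{max}_H(C)\}$ for a short exact sequence $0\to A\to B\to C\to 0$.
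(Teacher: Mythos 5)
Your argument is correct and is essentially the standard one: the paper gives no proof of this proposition, deferring to \cite[Prop.~A.16]{GKP12}, and the proof there is the same reduction --- via big open subsets, Harder--Narasimhan filtrations and the product filtration on the tensor product --- to the characteristic-zero theorem that the tensor product of two $\mu$-semistable torsion-free sheaves is $\mu$-semistable, which you correctly isolate as the crux. The only minor point is that the statement, as written for arbitrary coherent sheaves, should be read for torsion-free ones (otherwise $\mu^{max}_H$ is not well defined), which your proof tacitly assumes and which suffices for the paper's application to $\Omega_{V}^{[1]}$ and its reflexive tensor powers.
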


For a proof of this proposition, see \cite[Prop. A.14]{GKP12}. Now we are ready to prove Theorem
\ref{fib-over-curve}.

\begin{proof}[Proof of Theorem \ref{fib-over-curve}]

We may assume that $\mathrm{dim}\,V>1$. We will argue by contradiction. Assume that there is a positive integer $m$ such that $H^0(V,(\Omega_{V}^1)^{[\otimes m]}) \neq \{0\}$.
Let $H$ be an ample divisor on $V$.

Since $H^0(V,(\Omega_{V}^{1})^{[\otimes m]}) \neq \{0\}$ for some $m>0$, we have an injective morphism of sheaves 
$$\mathscr{O}_{V}\hookrightarrow (\Omega_{V}^1)^{[\otimes m]}.$$ This shows that $$\mu^{max}_H((\Omega_{V}^1)^{[\otimes m]}) \geqslant
\mu_H(\mathscr{O}_{V})=0.$$ By Proposition \ref{decomp-slope}, we have $\mu^{max}_H(\Omega_{V}^{[1]}) = m^{-1}
\mu^{max}_H((\Omega_{V}^1)^{[\otimes m]}) \geqslant 0$.

Therefore, there is a non-zero saturated coherent sheaf $\mathscr{F} \subseteq \Omega_{V}^{[1]}$ such that
$\mu_H(\mathscr{F}) \geqslant 0$.  Observe that $\mathrm{rank}\, \mathscr{F}< \mathrm{dim}\,V$, otherwise $\mathscr{F}=\Omega_{V}^{[1]}$ and det$\, \mathscr{F} \cong K_{V}$. Thus $\mu_H(\mathscr{F}) < 0$, a contradiction. 

We have two possibilities, either $\mu_H(\mathscr{F})> 0$ or $\mu_H(\mathscr{F}) = 0$.

\textit{Case 1.} Assume that $\mu_H(\mathscr{F})> 0$. Since $V$ has Picard number $1$, det$(\mathscr{F})$ is
ample and its Kodaira-Iitaka dimension is $\mathrm{dim}\,V$. However this
contradicts Bogomolov-Sommese vanishing theorem (see \cite[Cor. 1.3]{Gra12}). 

\textit{Case 2.} Assume that $\mu_H(\mathscr{F})= 0$. Let $\mathscr{G}=$det$\, \mathscr{F}$. Then $\mathscr{G} \cdot H^{(\mathrm{dim}\,V-1)}=0$. Since $V$ is $\mathbb{Q}$-factorial and klt, by \cite[Lem. 2.6]{AD12}, there exists an integer $l$ such that $(\mathscr{G}^{\otimes l})^{**}$ is isomorphic to $\mathscr{O}_V$. Let $m$ be the smallest positive integer such that $(\mathscr{G}^{\otimes m})^{**} \cong \mathscr{O}_V$. We can construct the cyclic cover $q:Z \to V$ of $V$ corresponding to $\mathscr{G}$, see \cite[Def. 2.52]{KM98}. Then $(q^*\mathscr{G})^{**} \cong \mathscr{O}_Z$. Since $q$ is \'etale in codimension 1, $Z$ is also klt by \cite[Prop. 3.16]{Kol97} and $-K_Z=q^*(-K_{V})$ is ample. Thus $Z$ is rationally connected by \cite[Cor. 1.3 and 1.5]{HM07}. And there are natural injective morphisms $$ (q^*\mathscr{G})^{**} \hookrightarrow (q^* \Omega_{V}^{[\mathrm{rank\, \mathscr{F}}]})^{**} \hookrightarrow \Omega_{Z}^{[\mathrm{rank\, \mathscr{F}}]}.$$ Hence we have an injection $\mathscr{O}_Z \hookrightarrow \Omega_{Z}^{[\mathrm{rank\, \mathscr{F}}]}$, but this contradicts \cite[Thm. 5.1]{GKKP11}.
\end{proof}

\section{Mori fibre surfaces over a curve}
\label{Mori fibre surfaces over a curve}

Recall that a Mori fibration $V\to W$ is a projective fibration such that $-K_V$ is relatively ample and the relative Picard number is $1$. A Mori fibre space $V$ is just a variety endowed with a Mori fibration $V\to W$. In this section, we study Mori fibration from a quasi-projective surface with canonical singularities to a smooth curve. In the first subsection, we will give some properties of the fibres. In the second subsection, we will classify the singularities on a non-reduced fibre.

We would like to introduce some notation for this section first. Let $p: S\to B $ be a Mori fibration, where $B$ is a
smooth curve and $S$ is a normal surface with canonical singularities. Let $r: \widetilde{S} \to S$ be the
minimal resolution and $\tilde{p}=p \circ r : \widetilde{S} \to B$. 

Since $S$ is singular at only finitely many points, $p$ is smooth over general points of $B$ and general fibres are all isomorphic to $\mathbb{P}^1$. Note that
a point in a smooth curve can also be regarded as a Cartier divisor and since any two fibres of $p$ are numerically equivalent, we have $K_{S} \cdot p^*z=-2$ and $p^*z \cdot p^*z=0$ for any $z\in B$ by the adjunction formula.

We recall the definition of dual graph. Let $E= \bigcup E_i$ be a collection of proper curves on a normal surface $V$ such that $V$ is smooth along $E$. The
\textit{dual graph} $\Gamma$ of $E$ is defined as follow:

(1) The vertices of $\Gamma$ are the curves $E_i$.

(2) Two vertices $E_i \neq E_j$ are connected with $(E_i \cdot E_j)$ edges.

\subsection{Some properties of fibres}\label{Some properieties of fibres}

Running a $\tilde{p}$-MMP for $\widetilde{S}$, we obtain a sequence of divisorial contraction

\centerline{
\xymatrix{
\widetilde S\ar[r]\ar[ddrrr]^{\tilde p} & Y_1\ar[r] & \cdots\ar[r] & Y_{n'}\ar[dd]\\
& & & \\
& & & B\\
}
}

\begin{lemma}
\label{Hirzebruch surface}
With the notation in the diagram above, the surface $Z=Y_{n'}$ is a ruled surface over $B$. Moreover, the support of $\tilde{p}^*b$ is a snc tree, \textit{i.e.} it is a snc divisor and its dual graph is a tree, for all point $b\in B$.
\end{lemma}

\begin{proof}
Since $p_{Z}: Z\to B$ is the result of a  $\tilde{p}$-relative MMP,  $Z$ is a smooth surface. Note that $K_{Z}$ has negative intersection number with general fibres of $p_{Z}$. Hence the next extremal contraction in the MMP is a contraction of fibre type.  This contraction gives $Z$ the ruled surface structure over $B$.
 
Note that $\widetilde{S}$ can be obtained by a sequence of blow-ups from $Z$. Thus the dual graph of the support of any fibre of  $\tilde{p}$ is a snc tree.
\end{proof}

We collect some properties for the fibre of $p:S\to B$.

\begin{prop}
\label{prop-fibre}
Let $z$ be a point in $B$. Then 
\begin{enumerate}
\item[(1)] the support $C$ of $p^*z$ is an irreducible Weil divisor for every $z\in B$;
\item[(2)] the coefficient of $C$ in $p^*z$ is at most equal to 2;
\item[(3)] $S$ is smooth along the support of $p^*z$ if and only if $p^*z$ is reduced;
\item[(4)] There exist at most 2 singular points of $S$ on $C$.
\end{enumerate}
\end{prop}

\begin{proof}
(1) Assume the opposite and let $D$, $D'$ be two distinct components in $p^*z$ which meet. Then $D \cdot D'>0$ and $D \cdot D <0$ since $p^*z \cdot D =0$. However, there is a positive number $\lambda$ such that $\lambda D$ and $D'$ are numerically equivalent by the definition of Mori fibration. Hence $D \cdot D>0$. This is a contradiction.  

(2) Let  $\alpha \in \mathbb{N}$ be the coefficient of $C$ in $p^*z$. Then $$-2=K_S \cdot p^*z=\alpha K_S \cdot C.$$ However, since $K_S$ is a Cartier divisor, $K_S \cdot C \in \mathbb{Z}$. Thus $-2 \in \alpha \mathbb{Z}$ which means $\alpha \leqslant 2$. 

(3) Note that $S$ is Cohen-Macaulay since it is a normal surface. Therefore the Cartier divisor $p^*z$ is also Cohen-Macaulay. Hence it is generically reduced if and only if it is a reduced subscheme. Moreover, since $B$ is a smooth curve, the morphism $p$ is a flat morphism. 

First we assume that $p^*z$ is reduced. Then the arithmetic genus of $p^*z$ is $0$ since $p$ is flat and general fibres of $p$ are smooth rational curves. Hence $p^*z$ is isomorphic to $\p^1$ (cf. \cite[Ex. IV.1.8(b)]{Har77}). Hence $p$ is smooth over $z$ since it is flat. Thus $S$ is smooth along $p^*z$.

Conversely, assume that $S$ is smooth along $p^*z$. Then by adjunction formula, we have $$2h^1(C,\sO_C)-2=(K_S+C)\cdot C=K_S \cdot C<0.$$ Therefore, $K_S \cdot C=-2$ which is equal to $K_S\cdot p^*z$. Hence $p^*z$ is  reduced. 

(4) Assume that $S$ is not smooth along $C$. Then $p^*z=2C$ by (2) and (3). Let  $\widetilde{C}$ be the strict transform of $C$ in $\widetilde{S}$ and let $E=\tilde{p}^*z-2\widetilde{C}$. Since   $E$ is $r$-exceptional, we have $$K_{\widetilde{S}} \cdot E= r^*K_S \cdot E=0.$$  Thus $$K_{\widetilde{S}} \cdot \widetilde{C} = 2^{-1}(K_{\widetilde{S}} \cdot \tilde{p}^*z)=-1.$$ By the adjunction formula $\widetilde{C}^2=-1$ ($\widetilde{C}$ is smooth by Lemma \ref{Hirzebruch surface}). Then $$-1=\widetilde{C}^2 = 2^{-1} \widetilde{C} \cdot (\tilde{p}^*z - E)=-2^{-1}\widetilde{C} \cdot E.$$ We obtain $ \widetilde{C}\cdot E=2$. This implies that  $\widetilde{C}$ and $E$ meet at most at two points. Hence $S$ has at most two singular point on $C$.
\end{proof}

\subsection{Singularities on non-reduced fibres}
\label{Singularities on non-reduced fibres}
The aim of this subsection is to  give a complete list of possible multiple fibres of $p:S \to B$. The subject was studied in \cite[\S 11.5]{KM99}, but we will give some elementary proofs of the results here. In the remaining of this section, we will assume that $p$ has non-reduced fibre over and only over $0 \in B$. By Proposition \ref{prop-fibre}, this implies that $p^*0=2C$, where $C$ is the support of $p^*0$. We denote the strict transform of $C$ in $\widetilde{S}$ by $\widetilde{C}$. We will prove the following theorem.

\begin{thm}
\label{table-fibre}
Let $p:S \to B$ be  a Mori fibration such that $S$ is a quasi-projective surface with canonical singularities and  $B$ is a smooth curve. Assume that $p^*0$ is a multiple fibre, where $0\in B$. Let $r: \widetilde{S} \to S$ be the minimal resolution of singularities along the fibre $p^*0$ and let $\tilde{p}=p\circ r$. We have the following table of possible singular fibres for $p^*0$. In the table, the dual graph is the one of the support of $\tilde{p}^* 0 \subseteq \widetilde{S}$, the point with label $s$ corresponds to $\widetilde{C}$ and the other points correspond to the $r$-exceptional divisors.

\begin{center}
\renewcommand{\arraystretch}{0}
    \begin{tabular}{ |>{\centering\arraybackslash}m{3cm}|>{\centering\arraybackslash}m{7cm}|} 
    
    \hline
         
  \vspace{0.1cm} \centering  $\mathrm{Type\ of\ fibre}$ \vspace{0.1cm} & $\mathrm{Dual\ graph}$ \\ \hline
   \centering $(A_1+A_1)$ & \vspace{0.1cm} \centerline{
\xymatrix{
\overset{1}{\bullet} \ar@{-}[r] &\overset{s}{\circ} \ar@{-}[r] &\overset{2}{\bullet}
}} \\ \hline
   \centering $(D_3)$  & \vspace{0.1cm} \centerline{
\xymatrix{
\overset{1}{\bullet} \ar@{-}[r] &\overset{3}{\bullet} \ar@{-}[r] \ar@{-}[d] &\overset{2}{\bullet}\\
 & \underset{s}{\circ} 
}}  
\\ \hline
  \centering  $(D_i) \ \mathrm{with} \ i>3 $ & \vspace{0.1cm}  \centerline{
\xymatrix{
\overset{1}{\bullet} \ar@{-}[r] &\overset{3}{\bullet} \ar@{-}[r] \ar@{-}[d] &\overset{4}{\bullet} \ \ \cdots \ \ \overset{i}{\bullet} \ar@{-}[r] &\overset{s}{\circ}\\
 & \underset{2}{\bullet} 
}}      \\
    \hline
    \end{tabular}
\end{center}
\end{thm}

In the table of the theorem above, we see that a multiple fibre of type $(A_1+A_1)$ is a multiple fibre which contains exactly two singular points and both of them are of type $A_1$. A multiple fibre of type $(D_i)$ with $i\geqslant 3$ is a multiple fibre which contains exactly one singular point which is of type $D_i$.

We will prove the theorem by proving several lemmas (Lemma \ref{2-sing-fib}, \ref{1-sing-fib-1} and \ref{constr-fib}). Note that by Proposition \ref{prop-fibre}, there exist one or two singular points of $S$ on $C$. We will first treat the case of two singular points.

\begin{lemma}
\label{2-sing-fib}
Assume that there are two singular points on $C$, then each of them is of type $A_1$.
\end{lemma}

\begin{proof}
Let $E=\tilde{p}^*0-2\widetilde{C}$. As in the proof of Proposition \ref{prop-fibre}.4, we have $\widetilde{C} \cdot E=2$. Since there are two singular points on $C$, $E$ has exactly two connected components. Thus we can decompose $E$ into $D+D'+R$, where $D$ and $D'$ are the two components in $E$ which meet $\widetilde{C}$. Then we have $$\widetilde{C} \cdot D = \widetilde{C} \cdot D'=1,\ D \cdot D' =0 \ \mathrm{and} \ \widetilde{C} \cdot R =0.$$ Note that both $D$ and $D'$ are $(-2)$-curves, hence $$0=\tilde{p}^*z \cdot D = 2\widetilde{C} \cdot D + D^2 +D' \cdot D + R \cdot D = R\cdot D.$$ This implies that $R$ and $D$ do not meet since both $R$ and $D$ are effective. By symmetry, $R$ and $D'$ do not meet neither. However, since $E$ has exactly two connected components, we obtain that $R=0$. Hence both of the singular points on $C$ are of type $A_1$.
\end{proof}

This type of fibre is the type $(A_1+A_1)$. Note that this type of fibre does exist by Example \ref{first-exa}. Next we will study the case of one singular point. We will prove that this isolated singularity is of type $D_i$ ($i \geqslant 3$ and the type $D_3$ is just $A_3$).

\begin{lemma}
\label{1-sing-fib-1}
The isolated singularity on the fibre over $0 \in B$ can only be of type $D_i$ $(i \geqslant 3)$.
\end{lemma}

\begin{proof}

Let $C_0=\widetilde{C}$ and let $E_0=\tilde{p}^*0 - 2C_0$. As in the proof of Proposition \ref{prop-fibre}.4, we have $$C_0^2=-1 \ \mathrm{and} \  E_0 \cdot C_0 = 2.$$ Since $E_0 \cdot p^*0=0$, we obtain $E_0^2=-4$.  

Since there is only one singular point on $C$ and the support of $\tilde{p}^*0$ is a snc tree (see Lemma \ref{Hirzebruch surface}), we can decompose $E_0$ into $2C_1+E_1$, where $C_1$ is the unique component in $E_0$ which meets $C_0$. Then $C_1$ is a $(-2)$-curve. Since $2C_1 \cdot \tilde{p}^*0 = 0$ and $E_0^2=-4$, we have $$ E_1^2=-4 \ \mathrm{and} \ C_1 \cdot E_1=2.$$ Thus the support of $E_1$ intersects $C_1$ at one or two points. If they intersect at two points, then as in Lemma \ref{2-sing-fib}, $E_1=D+D'$ where $D$, $D'$ are smooth rational curves, and we have $$D\cdot D'=0, \  D\cdot C_1=1, \ D'\cdot C_1=1.$$ If $E_1$ and $C_1$ intersect at one points, then we can decompose $E_1$ into $2C_2+E_2$ where $C_2$ is the unique component of $E_1$ which meets $C_1$. As above, we have $$C_2^2=-2,\ E_2^2=-4 \ \mathrm{and} \ E_2 \cdot C_2 = 2.$$ We are in the same situation as before. Hence by induction, we can decompose $E_0$ into $2(D_1+\cdots +D_i)+D+D'$ where $D$, $D'$ and all the $D_j$'s are $(-2)$-curves. Furthermore, we have $$D \cdot D'=0, \ D_i \cdot D=1,\ D_i \cdot D'=1,\ D_j \cdot D_{j+1}=1$$ for $1 \leqslant j \leqslant i-1$, and $D_j \cdot D_k=0$ if $k-j>1$. This shows that the singular point is of type $D_{i+2}$.
\end{proof}

These types of fibres are the type $(D_i)$ $(i \geqslant 3)$. Now we will prove that these kinds of fibres exist. We will need the following lemma.

\begin{lemma}
\label{contract-1}
Let $x \in S$ be a smooth point over $0 \in B$ and let $W$ be the blow-up of $S$ at $x$ with exceptional divisor $E
\subseteq W$. Let $D$ be the strict transform of $C$ in $W$. Then we can blow down $D$ and obtain another Mori fibre surface  $q:T \to B$.
\end{lemma}

\begin{proof}
Let $W \hookrightarrow W_1$ be a projective compactification of $W'$ such that $W_1$ has canonical singularities. If we can blow down $D$ in $W_1$, then we can also blow down $D$ in $W$. Hence we may assume that $W$ is projective.

We have $C \cdot C=0$, $K_{S} \cdot C=-1$, $K_{W} \cdot E=-1$, $E \cdot E=-1$ and $D \cdot E=1$. Thus $$K_{W} \cdot D=0\ \mathrm{and}\  D \cdot D=-1.$$ 

Let $H$ be an ample divisor on $W$. Then there is a positive integer $k$ such that $(H+kD)\cdot D=0$. Let $A=H+kD$. Note that $A$ is nef and big and $D$ is the only curve which has intersection number $0$ with $A$. Since $K_W\cdot D=0$, for large enough positive integer $a$, the divisor $aA-K_X$ is nef and big. Hence by the basepoint-free theorem (see \cite[Thm. 3.3]{KM98}), there is a positive integer $b$ such that the linear system $|bA|$ is basepoint-free. Let $c:W\to T$ be the fibration induced by the linear system $|bA|$. Then $c$ contracts exactly $D$. Since $D$ is contracted by $W\to B$,  the fibration $W\to B$ induces a fibration  $q:T \to B$ which is also a Mori fibration.
\end{proof}

We can use the elementary transformation in Lemma \ref{contract-1} to construct every type of multiple fibres mentioned above.

\begin{lemma}
\label{constr-fib}
If $S$ is of type $(A_1+A_1)$ over $0 \in B$ then $T$ is of type $(D_3)$ over $0 \in B$.
If $S$ is of type $(D_i)$ over $0 \in B$ then $T$ is of type $(D_{i+1})$ over $0 \in B$ for $i \geqslant 3$.
\end{lemma}

\begin{proof}
We will compute the dual graph of the support of the fibre $\tilde{q}^*0$, where $\widetilde{T}\to T$ is the minimal resolution and $\tilde{q} $ is the composition of $\widetilde{T} \to T\to B$. Let $W$ be the same as in Lemma \ref{contract-1}. From the construction of $T$, we know that $\widetilde{T} \to T$ factors through $\widetilde{T} \to W$ and the last morphism is also the minimal resolution of $W$. Since $W\to S$ is a blow-up of a smooth point of $S$, the surface $\widetilde{T} $ can be obtained by blowing up the same point in $\widetilde{S}$.

If the fibre $p^*0$ is of type $(A_1+A_1)$, the dual graph of the support of $\tilde{p}^*0$ in $\widetilde{S}$ is

\centerline{
\xymatrix{
\overset{1}{\bullet} \ar@{-}[r] &\overset{s}{\circ} \ar@{-}[r] &\overset{2}{\bullet}
}}
\noindent where $s$ represents $\widetilde{C}$. Blow up the point we mentioned above, the new graph is 

\centerline{
\xymatrix{
\overset{1}{\bullet} \ar@{-}[r] &\overset{s}{\circ}\ar@{-}[d] \ar@{-}[r] &\overset{2}{\bullet}\\
& \underset{t}{\bullet} 
}}

This graph is the dual graph of the support of $\tilde{q} ^*0$ and the point with label $t$ corresponds to the strict transform of the support of $q^*0$ in $\widetilde{T}$.  The graph shows that there is only one singular point of $T$ on $q^*0$ which is of type $D_3$. Hence the fibre $q^*0$ is of type $(D_3)$.

If $p^*0$ is of type $D_i$, then from the proof of Lemma \ref{1-sing-fib-1}, we know that the dual graph of the support of $\tilde{p}^*0$ is

\centerline{
\xymatrix{
\overset{1}{\bullet} \ar@{-}[r] &\overset{3}{\bullet}\ar@{-}[d] \ar@{-}[r] &\overset{4}{\bullet} \ \  \cdots \ \   \overset{i}{\bullet} \ar@{-}[r]& \overset{s}{\circ}  \\
& \underset{2}{\bullet} 
}}

\noindent where the point with label $s$ correspond to $\widetilde{C}$ (If $i=3$, then $s$ is just connected to the point with label $3$). By blowing up the point, we obtain the dual graph of the support of $\tilde{q}^*0$, which is 

\centerline{
\xymatrix{
\overset{1}{\bullet} \ar@{-}[r] &\overset{3}{\bullet}\ar@{-}[d] \ar@{-}[r] &\overset{4}{\bullet} \ \  \cdots \ \   \overset{i}{\bullet} \ar@{-}[r]& \overset{s}{\circ} \ar@{-}[r] &\overset{t}{\bullet} \\
& \underset{2}{\bullet}
}}

The point with label $t$ corresponds to the strict transform of the support of $q^*0$ in $\widetilde{T}$. This implies that $q^*0$ is of type $(D_{i+1})$.
\end{proof}

\begin{proof}[{Proof of Theorem \ref{table-fibre}}]
We can deduce the theorem from Lemma \ref{2-sing-fib}, \ref{1-sing-fib-1} and \ref{constr-fib}.
\end{proof}

Now we will show that every singular fibre can be obtained from a smooth fibre by the methods we mentioned above.

\begin{lemma}
\label{all-constr}
The singular fibre $p^*0$ of $p:S\to B$ can be obtained from a smooth ruled surface  $S_1\to B$ by the method of Example \ref{first-exa} followed by a finite sequence of elementary birational transformations described in Lemma \ref{contract-1}.
\end{lemma}

\begin{proof}
Let $\widetilde{S}\to Z$ be the result of a $\tilde{p}$-MMP. Then $Z\to B$ is a ruled surface by Lemma \ref{Hirzebruch surface}. Moreover, $\widetilde{S}$ can be obtained from $Z$ by a sequence of blow-ups.

If $p^*0$ is of type $(A_1+A_1)$, then $\widetilde{S}$ can be obtained from $Z$ by two blow-ups as in the first two step of Example \ref{first-exa}. Blow-down the two $(-2)$-curves in  $\widetilde{S}$\, we obtain $S$. In this case, we  take $S_1=Z$.

If $p^*0$ is of type $(D_i)$, then the dual graph of $\tilde{p}^*0$ is 

\centerline{
\xymatrix{
\overset{1}{\bullet} \ar@{-}[r] &\overset{3}{\bullet}\ar@{-}[d] \ar@{-}[r] &\overset{4}{\bullet} \ \  \cdots \ \   \overset{i}{\bullet} \ar@{-}[r]& \overset{s}{\circ}  \\
& \underset{2}{\bullet} 
}}

Note that the curve corresponding to the point $s$ is a $(-1)$-curve. Hence we may blow down this curve and the curves which correspond to the points in the graph above which do not meet the point  $s$ (This is always possible by the lemma below). Then we will obtain another Mori fibre surface $p_U:U \to B$. The fibre $p_U^*0$ is of type $(D_{i-1})$ if $i>3$ and of type $(A_1+A_1)$ if $i=3$. Moreover, $U$ is smooth around the image of the curve corresponding to $s$. If we perform the birational transformation in Lemma \ref{contract-1} for $U$, then we will obtain $S$. 

By induction, we can conclude the lemma.
\end{proof}

The following lemma shows that we can contract some connected collection of $(-2)$-curves in a  surface.

\begin{lemma}
\label{contra-chain}
Let $E = \bigcup_{1\leqslant k \leqslant i} E_k$ be a connected collection of $(-2)$-curves in a smooth surface $V$ whose dual graph is the same  as the one of the support of the exceptional exceptional set of a minimal resolution for a canonical surface singularity. Then there exists a morphism $c: V \to W$ such that $W$ has canonical singularities and $c$ contracts exactly $E$.
\end{lemma}

\begin{proof}
We have $K_V \cdot E_k=0$ for every $k$.  The intersection matrix $\{E_k \cdot E_j\}$ is negative definite by \cite[Lem. 3.40]{KM98}. Thus there is a  contraction $c: V \to W$ contracting exactly $E$ by \cite[Prop 4.10]{KM98}. Note that $c$ is also the minimal resolution of $W$ and $K_V=c^*K_{W}$. Hence $W$ has canonical singularities.
\end{proof}

\section{Proof of Theorem 1.4}
\label{Proof of Theorem 1.4}

We will first prove Theorem \ref{iso-pluri-forms}. Let $X$ be a projective rationally connected   surface with canonical singularities which has non-zero pluri-forms. Run a MMP for $X$. We will get a sequence of divisorial contractions

\begin{center}
$X=X_0 \to X_1 \to \cdots \to X_n=X_f$.
\end{center}
The rational surface $X_f$ is a Mori fibre surface over $\p^1$ by Theorem \ref{fib-over-curve}. Let $p:X_f \to \mathbb{P}^1$ be the Mori fibration. Let $f:X\to X_f$ be the composition of the sequence of the birational morphisms above and let $\pi = p \circ f : X \to \mathbb{P}^1 $. Then for any $m \in \mathbb{N}$, there is an injection $H^0(X,(\Omega_{X}^1)^{[\otimes m]}) \hookrightarrow H^0(X_f,(\Omega_{X_f}^1)^{[\otimes m]})$.

\subsection{Source of non-zero reflexive pluri-forms}
\label{Source of nonzero reflexive pluri-forms}

In this subsection, we will find out the source of non-zero pluri-forms on $X_f$.  Let $U$ be the smooth locus of $X_f$. Then the morphism of locally free sheaves on $U$ $$p^*\Omega_{\p^1}^1\to \Omega_{U}^1$$ factors through $$\phi: p^*\Omega_{\p^1}^1 \otimes \sO_U(R) \to \Omega_U^1$$ where $R$ is the ramification divisor of $p$. Let $V$ be the largest subset of $U$ such that for any point $x\in V$, the valuation of $\phi$ at $x$ is injective. Then $\mathrm{codim}\, X_f\backslash V \geqslant 2$.  By Lemma \ref{refle-cod-2}, this implies that $H^0(X_f,(\Omega_{X_f}^1)^{[\otimes m]}) \cong H^0(V,(\Omega_{V} ^1)^{\otimes m})$ for any $m \in \mathbb{N}$.

Consider the exact sequence of sheaves on $V$

\begin{center}
$0 \to p^* \Omega_{\mathbb{P}^1}^1 \otimes \mathscr{O}_V(R)  \to \Omega_{V}^1
\to\mathscr{G} \to  0$
\end{center} 
where $\mathscr{G}$ is isomorphic to $\Omega_{V/\mathbb{P}^1}^1/\mathrm{torsion}$. It is an invertible sheaf on $V$ since $\mathscr{G} \otimes k_x$ is of rank $1$ at every point $x$ of $V$, where $k_x$ is the residue field of $x$ (see \cite[Ex. II.5.8]{Har77}). Then there is a filtration over $V$ $$(\Omega_{V}^1 )^{\otimes m} = \mathscr{F}_0 \supseteq \mathscr{F}_{1} \supseteq \cdots \supseteq \mathscr{F}_m \supseteq \mathscr{F}_{m+1}= 0$$ such that $\mathscr{F}_{i} / \mathscr{F}_{i+1}$ is the direct sum of copies of $\mathscr{G}^{\otimes (m-i)} \otimes (p^* \Omega_{\mathbb{P}^1}^1 \otimes \mathscr{O}_V(R))^{\otimes i}$ for every $i \in \{0,...,m\}$ and $\sF_m \cong (p^* \Omega_{\mathbb{P}^1}^1 \otimes \mathscr{O}_V(R) )^{\otimes m} \cong (p^* \Omega_{\mathbb{P}^1}^1)^{\otimes m} \otimes \mathscr{O}_V(mR)$.

\begin{lemma}
\label{source}
With the notation above, there is a natural isomorphism 
\begin{center}
$H^0(X_f,(\Omega_{X_f}^1)^{[\otimes m]}) \cong H^0(\mathbb{P}^1, \mathscr{O}_{\mathbb{P}^1}(-2m) \otimes p_{*} \mathscr{O}_{X }(mR))$
\end{center}
for all $m\geqslant 0$.
\end{lemma}

\begin{proof} 
Fix some $m\geqslant 0$. For a general point $z \in \mathbb{P}^1$, the support $C$ of the fibre $p^*z$ is isomorphic to $\mathbb{P}^1$ and is contained in $V$. Since $p$ is smooth along $C$, we have $$\mathscr{G}|_{C} \cong \mathscr{O}_{C}(-2) \ \mathrm{and} \ (p^*\Omega_{\mathbb{P}^1}^1 \otimes \mathscr{O}_V(R))|_{C} \cong \mathscr{O}_{C}.$$  Thus $(\mathscr{F}_{i} / \mathscr{F}_{i+1})|_{C}$ is the direct sum of copies of $\mathscr{O}_{C}(2(i-m))$ for $i \leqslant m$. Hence $H^0(V, \mathscr{F}_{i} /
\mathscr{F}_{i+1})=0$ and $H^0(V, \mathscr{F}_{i}) \cong H^0(V, \mathscr{F}_{i+1})$ for $i < m$. This implies that $$H^0(V, (\Omega_V^1)^{\otimes m}) \cong H^0(V, (p^* \Omega_{\mathbb{P}^1}^1)^{\otimes m} \otimes \mathscr{O}_V(mR) ).$$

By Lemma \ref{refle-cod-2}, the isomorphism above induces an isomorphism $$H^0(X, (\Omega_X^1)^{[\otimes m]}) \cong H^0(X, (p^* \Omega_{\mathbb{P}^1}^1)^{\otimes m} \otimes \mathscr{O}_X(mR)).$$

Note that the right hand side above is isomorphic to $H^0(\mathbb{P}^1, p_*((p^* \Omega_{\mathbb{P}^1}^1)^{\otimes m} \otimes \mathscr{O}_X(mR)))$. By the projection formula, it is isomorphic to $H^0(\mathbb{P}^1, ( \Omega_{\mathbb{P}^1}^1)^{\otimes m} \otimes p_*\mathscr{O}_X(mR))$. Hence $$H^0(X_f,(\Omega_{X_f}^1)^{[\otimes m]}) \cong H^0(\mathbb{P}^1, \mathscr{O}_{\mathbb{P}^1}(-2m) \otimes p_{*} \mathscr{O}_{X }(mR)).$$
\end{proof}

Note that $p_{*} \mathscr{O}_{X }(mR)$ is a torsion-free sheaf of rank 1 on $\mathbb{P}^1$. Thus it is an invertible sheaf and there is a $k \in \mathbb{Z}$ such that $\mathscr{O}_{\mathbb{P}^1}(k)$ is isomorphic to $p_{*} \mathscr{O}_{X }(mR)$. In the following lemma we will compute the integer $k$.

\begin{lemma}
\label{pluri-form-Xf}
Assume that the non-reduced fibres of $p:X_f \to \mathbb{P}^1$ are over $z_1,...,z_r$. Then for $m\in \mathbb{N}$, we have $p_* \mathscr{O}_{X}(mR) \cong \mathscr{O}_{\mathbb{P}^1}([\dfrac{m}{2}](z_1+...+z_r)) \cong
\mathscr{O}_{\mathbb{P}^1}([\dfrac{m}{2}]r)$, where $[\quad]$ is the integer part. In particular, $H^0(X_f,(\Omega_{X_f}^1)^{[\otimes m]}) \cong H^0(\mathbb{P}^1, \mathscr{O}_{\mathbb{P}^1}(-2m+[\dfrac{m}{2}]r))$.
\end{lemma}

\begin{proof}
Since the problem is local around every point $z_i$, we may assume that $r=1$ for simplicity. From Proposition \ref{prop-fibre}.1 and Proposition \ref{prop-fibre}.2, we know that $R$ is irreducible and $p^*z_1 = 2R$. We may assume that $p_* \mathscr{O}_{X}(mR) \cong \mathscr{O}_{\mathbb{P}^1}(k\cdot z_1)$ and we have to prove that $k=[\dfrac{m}{2}]$.

Note that $\gamma \in H^0(\mathbb{P}^1, \mathscr{O}_{\mathbb{P}^1}(k\cdot z_1))$ is just a rational function on $\mathbb{P}^1$ which can only have pole at $z_1$ with multiplicity at most $k$. Its pull-back to $X$ is a rational function which can only have pole along $R$ with multiplicity at most $2k$. Thus $k$ is the largest integer
such that $2k \leqslant m$, \textit{i.e.} $k=[\dfrac{m}{2}]$.
\end{proof}

\subsection{Back to the initial variety}\label{Back to the initial variety}
We have studied $X_f$ and now we have to reverse the MMP and pull back  pluri-forms to the initial variety $X$.   Our aim is to prove that $$H^0(X,(\Omega_{X}^1)^{[\otimes m]})
\cong H^0(X_f,(\Omega_{X_f}^1)^{[\otimes m]}).$$

We will need the following proposition. 

\begin{prop}
\label{iso-sing}
Let $S$ be a projective surface which has at most canonical singularities. Let $c: S \to T$ be a divisorial contraction in a MMP. Let $E$ be the exceptional divisor and let $x$ be the image of $E$. Then $T$ is smooth at $x$.
\end{prop}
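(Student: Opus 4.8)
The plan is to analyze the extremal divisorial contraction $c : S \to S_1$ on the minimal resolution. Let $r : \widetilde{S} \to S$ be the minimal resolution of $S$; since $S$ has canonical (Du Val) singularities, $K_{\widetilde{S}} = r^* K_S$ and $K_{\widetilde{S}}$ is $r$-nef. First I would pull back the contraction: let $\widetilde{c} : \widetilde{S} \to \widetilde{S}_1$ be a resolution obtained by running the MMP, so that we have a commutative diagram relating $\widetilde{S}$, $S$, $S_1$. The divisor $E$ contracted by $c$ is an irreducible rational curve with $E^2 < 0$ and $K_S \cdot E < 0$ (it spans an extremal ray). The key numerical input is the adjunction formula: writing $\widetilde{E}$ for the strict transform of $E$ in $\widetilde{S}$, and using that the exceptional curves of $r$ are $(-2)$-curves orthogonal to $K_{\widetilde{S}}$, I get $K_{\widetilde{S}} \cdot \widetilde{E} = K_S \cdot E < 0$, so $K_{\widetilde{S}} \cdot \widetilde{E} \in \{-1\}$ (it is a negative integer, and one should check it cannot be $\le -2$ using that $\widetilde{E}$ moves in at most a point's worth — more precisely, $\widetilde{E}$ together with the $r$-exceptional curves over $x$ forms the full $\widetilde{c}$-exceptional locus, which is a tree of smooth rational curves).

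Next I would identify the dual graph of the total transform of $x$ in $\widetilde{S}$. The $\widetilde{c}$-exceptional curves form a connected configuration: the strict transform $\widetilde{E}$ plus the chains (or trees) of $(-2)$-curves resolving the Du Val point(s) of $S$ lying on $E$. Running a relative MMP over $S_1$ contracts these one at a time; the only curve with negative self-intersection meeting $K_{\widetilde{S}}$ negatively is $\widetilde{E}$ (self-intersection $-1$), so it gets contracted first, and then one continues. The point $x \in S_1$ is smooth precisely when the entire $\widetilde{c}$-exceptional configuration can be contracted to a smooth point, equivalently when it is the exceptional divisor of a sequence of smooth blow-ups. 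I would argue this by showing the intersection matrix of the $\widetilde{c}$-exceptional curves is (up to sign) the one coming from iterated blow-ups of a smooth point, using that $K_{\widetilde{S}}$ is $r$-nef together with $K_{\widetilde{S}}\cdot \widetilde{E} = -1$ and the classification of Du Val graphs; alternatively, since $S_1$ again has canonical singularities only if... — here I'd invoke that an extremal contraction from a Du Val surface preserves the canonical (Du Val) property, so $S_1$ has at worst Du Val singularities at $x$, and then a direct check on the possible dual graphs rules out all Du Val types, forcing $x$ to be smooth.

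Concretely, the cleanest route: suppose for contradiction $x \in S_1$ is singular. Then $\widetilde{S}_1 \to S_1$ has nontrivial exceptional locus over $x$ consisting of $(-2)$-curves (Du Val), but $\widetilde{S} \to \widetilde{S}_1$ contracts $\widetilde{E}$ and possibly more $(-1)$-curves appearing successively. Comparing $K_{\widetilde{S}} = r^*K_S$ with the pullback of $K_{S_1}$ along $\widetilde{S} \to S_1$: since $c$ is a divisorial contraction of the extremal ray generated by $E$, one has $K_S = c^* K_{S_1} + a E$ with discrepancy $a > 0$ (as $K_S \cdot E < 0$); pulling back to $\widetilde{S}$ and intersecting with the $r$-exceptional curves over $x$ and with $\widetilde{E}$ gives a system of equations on the discrepancies of $\widetilde{S} \to S_1$. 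The condition that these discrepancies be $\ge 0$ (canonical) together with $K_{\widetilde S}$ being $r$-nef pins down the configuration, and one reads off that the only solution is $a = 1$ and $x$ smooth.

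The main obstacle I expect is the bookkeeping in the last step: controlling the dual graph of the full $\widetilde{c}$-exceptional locus and verifying that no Du Val graph (any $A_n$, $D_n$, $E_n$) can occur as the image singularity. This is essentially a finite but slightly delicate combinatorial check on negative-definite intersection matrices; the clean way to organize it is to run the relative MMP $\widetilde{S} \to S_1$ explicitly, noting at each stage that contracting the unique $(-1)$-curve either terminates (yielding a smooth $S_1$) or produces a new $(-1)$-curve, and to show termination forces smoothness. I would present this using the machinery of Proposition \ref{Hirzebruch surface}-style relative MMP arguments already in the paper.
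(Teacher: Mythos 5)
Your setup coincides with the paper's: take the minimal resolutions $r_S:\widetilde{S}\to S$ and $r_{S_1}:\widetilde{S_1}\to S_1$, the induced map $\tilde c:\widetilde S\to\widetilde{S_1}$, and observe $K_{\widetilde S}\cdot\widetilde E=r_S^*K_S\cdot\widetilde E=K_S\cdot E<0$, so $\widetilde E$ is $\tilde c$-exceptional. But from there the actual content of the proposition --- why no Du Val singularity can appear at $x$ --- is never established. You defer it to ``a finite but slightly delicate combinatorial check'' on dual graphs, and the one concrete mechanism you offer, namely that ``termination forces smoothness'' of the relative MMP over $S_1$, is not a proof and is essentially circular: the relative MMP of $\widetilde S$ over $S_1$ terminates at the minimal resolution $\widetilde{S_1}$ of $S_1$ whether or not $x$ is smooth, so termination by itself gives nothing. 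What you would need is that the MMP contracts the \emph{entire} fiber over $x$, which is exactly the statement to be proved. The discrepancy sketch is likewise incomplete: to make it work you would have to note that $\sum b_iF_i=a\,r_S^*E$ with $a>0$ and with all coefficients of $r_S^*E$ along curves over $E$ strictly positive (negativity lemma), so \emph{every} divisor of $\widetilde S$ over $x$ has strictly positive discrepancy over $S_1$, and then contrast this with the fact that a singular Du Val point always carries crepant (discrepancy-zero) curves on its minimal resolution. Neither half of that argument appears in your write-up, so as it stands there is a genuine gap.

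For comparison, the paper closes the argument with a short self-intersection count rather than a case analysis: assume $x$ is singular, so $\tilde c(\widetilde E)$ is a point lying on some $r_{S_1}$-exceptional $(-2)$-curve $D$. The strict transform $\widetilde D\subseteq\widetilde S$ maps into $E$ under $r_S$ but is not $\widetilde E$ (it is not $\tilde c$-exceptional), hence it is $r_S$-exceptional and therefore itself a $(-2)$-curve. Since $\tilde c$ blows down $\widetilde E$ (and possibly further curves) at a point of $\widetilde D$, one gets $D^2>\widetilde D^2=-2$, contradicting $D^2=-2$. This single comparison replaces the entire combinatorial classification you were planning; I would recommend adopting it, or else fully carrying out the positive-discrepancy argument sketched above. (A minor point: your claim $K_{\widetilde S}\cdot\widetilde E=-1$ is true --- by adjunction both $K_{\widetilde S}\cdot\widetilde E$ and $\widetilde E^2$ are negative integers summing to $-2$ --- but it is not needed; only the negativity is used.)
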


\begin{proof}
We suppose the opposite. Let $r_S: \widetilde{S} \to S$ and $r_{T}: \widetilde{T} \to T$ be the minimal resolutions. Let $\widetilde{E}$ be the strict transform of $E$ in $\widetilde{S}$. We have a commutative diagram

\centerline{
\xymatrix{
\widetilde{S} \ar[r]^{\tilde{c}} \ar[d]_{r_S}  & \widetilde{T} \ar[d]^{r_{T}}\\
S \ar[r]^c  & {T}\\
}
}
 Then $K_{\widetilde{S}} \cdot \widetilde{E} = r^*K_S \cdot \widetilde{E} = K_S \cdot {r_S}_*\widetilde{E} = K_S \cdot E < 0$ by the definition of MMP. Since $K_{{\widetilde{T}}}$ is $r_T$-nef, $\widetilde{E}$ must be contracted by $\tilde{c}$. Since $E$ is over $x$, $\tilde{c}(\widetilde{E})$ is contained in an exceptional divisor $D$ of $r_{T}$. Let $\widetilde{D}$ be the strict transform of $D$ in $\widetilde{S}$. Then $\widetilde{D}$ is contracted by $r_S$ for $\widetilde{D} \neq \widetilde{E}$ and the image of $\widetilde{D}$ in $T$ is a point. Thus $\widetilde{D}$ is a $(-2)$-curve in $\widetilde{S}$ since $S$ has canonical singularities. 
 
Since $T$ has canonical singularities, $D$ is also a $(-2)$-curve. Note that $\tilde{c}$ is the composition of a sequence of blow-ups of smooth points (see \cite[Cor. V.5.4]{Har77}). Moreover, for $\tilde{c}$, we have to blow up the point $x$ which is contained in $D$. Hence the self-intersection number of $\widetilde{D}$ is less than $(-2)$. We obtain a contradiction.
\end{proof}

By Proposition \ref{iso-sing}, every exceptional divisor of $f:X \to X_f$ is over a smooth point of $X_f$. Now we can prove the isomorphism we mentioned at the beginning of this subsection. 

\begin{lemma}
\label{entire-ramifi}
The natural injection $H^0(X,(\Omega_{X}^1)^{[\otimes m]})
\to H^0(X_f,(\Omega_{X_f}^1)^{[\otimes m]})$ is an isomorphism. 
\end{lemma}

\begin{proof}
Let $X_a\to X$ be a projective birational morphism which is the minimal resolution for the singular points of $X$ lying over smooth points of $X_f$. Then there is a natural injection $$H^0(X_a,(\Omega_{X_a}^1)^{[\otimes m]})  \to  H^0(X,(\Omega_{X}^1)^{[\otimes m]}).$$ By Proposition \ref{iso-sing}, $f^{-1}$ is an isomorphism around the singular points of $X_f$. Hence all exceptional divisors of $X_a \to X_f$ are over smooth points of $X_f$. This implies that $X_a$ can be obtain from $X_f$ by a sequence of blow-ups of smooth points (see \cite[Cor. V.5.4]{Har77}). 

\centerline{
\xymatrix{
 & X_a \ar[ld]_{\mathrm{resolution}} \ar[rd]^{\mathrm{blow-up}} &\\
X \ar[rr] & & X_f
}
}

Then we have a natural isomorphism $ H^0(X_a,(\Omega_{X_a}^1)^{[\otimes m]}) 
\cong  H^0(X_f,(\Omega_{X_f}^1)^{[\otimes m]}),$ which implies that $H^0(X,(\Omega_{X}^1)^{[\otimes m]})
\cong H^0(X_f,(\Omega_{X_f}^1)^{[\otimes m]})$.
\end{proof}

We can conclude Theorem \ref{iso-pluri-forms}.

\begin{proof}[{Proof of Theorem \ref{iso-pluri-forms}}]
By Theorem \ref{fib-over-curve}, we have a Mori fibration $p:X_f\to \p^1$. Lemma \ref{pluri-form-Xf} and \ref{entire-ramifi} shows that $H^0(X,(\Omega_{X}^1)^{[\otimes m]}) \cong H^0(X_f,(\Omega_{X_f}^1)^{[\otimes m]}) \cong H^0(\mathbb{P}^1,
\mathscr{O}_{\mathbb{P}^1}(-2m+[\dfrac{m}{2}]r)).$
\end{proof}

\section{Proof of Theorem 1.3}\label{Proof of Theorem 1.3}
We will  prove Theorem \ref{constr-thm} in this section. If $X$ is a projective rationally connected surface with canonical singularities such that $H^0(X, (\Omega_X)^{[\otimes m]}) \neq \{0\}$ for some $m>0$ and   $X_f$ is the result of a MMP, then $X$ and $X_f$ are isomorphic around the singular locus of $X_f$ by Proposition \ref{iso-sing}. The proof of Lemma \ref{entire-ramifi} gives us an idea of how to reconstruct $X$ from $X_f$. First we construct the surface $X_{a}$ (the surface defined in the proof of Lemma \ref{entire-ramifi}) which can be obtained from $X_f$ by a sequence of blow-ups of smooth points. Then we blow down some exceptional $(-2)$-curves for $X_{a} \to X_f$ and we obtain $X$. Note that these are just the birational transformations mentioned in step (iii) and (iv) of Construction \ref{main-const}.

In order to contract the $(-2)$-curves in the transformation above, we want to use Lemma \ref{contra-chain}. Thus, we have to study the structure of the exceptional set of $X_{a} \to X_f$.

\begin{lemma}
\label{fib-sturc-b-u}
Denote a germ of smooth surface by $(0\in S)$. Let $h:S'\to S$ be the composition of a sequence of blow-ups of smooth
points over $0 \in S$. Let $D$ be the support of $h^*0$. Then any $(-2)$-curve in $D$ meets at most 2 other
$(-2)$-curves. In another word, the dual graph of $D$ cannot contain a subgraph as below such that each vertex of the
subgraph corresponds a $(-2)$-curve. 

\centerline{
\xymatrix{
\overset{1}{\bullet} \ar@{-}[r] & \overset{2}\bullet \ar@{-}[r] \ar@{-}[d] & \overset{4}\bullet\\
& \underset{3}\bullet
}
}
\end{lemma}

\begin{proof}
Assume the opposite. We know that we can reverse the process of blow-ups of smooth points by running a MMP relatively to $S$. Thus these four curves will be successively contracted during the MMP. The first one contracted cannot be the curve corresponding to the point with label $2$, since after the contraction, the dual graph of the remaining curves is a tree by an analogue result of Lemma \ref{Hirzebruch surface}. Without lose of generality, we may assume that the curve corresponding to the point with label $1$ is the first one  contracted.

If the curve corresponding to the point $3$ (or $4$) is contracted secondly, then the self-intersection number of the curve corresponding to the point $2$ becomes at least $0$. If the curve corresponding to the point $2$ is contracted secondly, a further contraction will also produce a curve with self-intersection number at least $0$. 

However, this curve of self intersection at least $0$ is over $0\in S$, it must have negative self-intersection number by the negativity theorem (see \cite[Lem. 3.40]{KM98}). This leads to a contradiction.
\end{proof}

In particular, by the lemma above, every connected collection of $(-2)$-curves in $D$ has a dual graph as below

\centerline{
\xymatrix{
{\bullet} \ar@{-}[r] & \bullet    \ \ \cdots \ \ \bullet 
}
}

This is the dual graph of the exceptional set of the minimal resolution for the singularity of type $A_i$. By Lemma \ref{contra-chain}, it is possible to contract such a chain of $(-2)$-curves.

Now we can prove Theorem \ref{constr-thm}.

\begin{proof}[{Proof of Theorem \ref{constr-thm}}]
First  let $X$ be a projective rationally connected surface with canonical singularities which carries non-zero pluri-forms. We will prove that $X$ can be constructed  by the method of Construction \ref{main-const}. Let $f:X\to X_f$ be the result of a MMP and let $X_a$ be the surface defined in the proof of Lemma \ref{entire-ramifi}. The surface $X$ can be obtained from $X_a$ by a contraction of chains of $(-2)$-curves by Lemma \ref{entire-ramifi}, \ref{fib-sturc-b-u} and \ref{contra-chain}. By the proof of Lemma \ref{entire-ramifi}, $X_a$ can be obtained from $X_f$ by a sequence of  blow-ups of smooth points. Since $X_f\to \p^1$ is a Mori fibration and $X_f$ has canonical singularities, $X_f$ can be obtain from a smooth ruled surface $X_0\to \p^1$ by the method of step (i) and (ii) of Constriction \ref{main-const} (see Lemma \ref{all-constr}). Thus $X$ can be constructed  by the method of Construction \ref{main-const}.

Now, let $X$ be a surface constructed by the method of Construction \ref{main-const}. We will prove that $X$ carries non-zero pluri-forms. If $X_f$ is the surface described in Construction \ref{main-const}, then there is a natural morphism $f:X \to X_f$ and both $X$ and $X_f$ have canonical singularities (see Lemma \ref{all-constr} and \ref{contra-chain}). Moreover, by construction, all of the $f$-exceptional curves are over smooth points of $X_f$. Let $Y\overset{g}{\longrightarrow} X_f$ be the result of a $f$-MMP for $X$. 

\centerline{
\xymatrix{
X \ar[r]  \ar[rd]_f & Y \ar[d]^g  \\
 & X_f
}
}

Then $K_Y$ is $g$-nef since $g$ is birational. Moreover, all of the $g$-exceptional curves are over smooth points of $X_f$. Let $r_Y:\widetilde{Y} \to Y$ be the minimal resolution. Then $K_{\widetilde{Y}}=r_Y^*K_Y$ since $Y$ has canonical singularities. If $h$ is the composition of $\widetilde{Y} \to Y \to X_f$, then $K_{\widetilde{Y}}$ is $h$-nef. Thus $h$ is the minimal resolution of $X_f$ and $h^{-1}$ is an isomorphism on the smooth locus of $X_f$. Hence $g^{-1}$ is also an isomorphism on the smooth locus of $X_f$. This implies that $g:Y\to X_f$ is an isomorphism. 

Since $X_f\to \p^1$ is a Mori fibration, $X_f$ is isomorphic to the result of a MMP for $X$ (This is why we use the same notation $X_f$). After Lemma \ref{pluri-form-Xf}, we know that $X_f$ carries non-zero pluri-forms. By Lemma \ref{entire-ramifi}, this shows that $X$ carries non-zero pluri-forms.
\end{proof}

\section{Proof of Theorem 1.5}\label{Proof of Theorem 1.5}

We would like to prove Theorem \ref{quo-thm} in this section. In \cite[Remark and Question 3.8]{GKP12}, for $X$ in
Example \ref{first-exa}, we can find a smooth elliptic curve $E$, a smooth ruled surface $Y$ (which is $\widetilde{X}$
in \cite{GKP12}) such that $\mathbb{P}^1$ is the quotient of $E$ by $\mathbb{Z}/2\mathbb{Z}$ and $X$ is the quotient of
$Y$ by the same group. In this section, we would like to construct such a surface $Y$ for any rationally connected
surface $X$  with canonical singularities and having non-zero pluri-forms.

We will first construct the curve $E$.

\begin{prop}
\label{constr-curve}
Let $q_1,...,q_r$ be $r$ different points on $\mathbb{P}^1$ with $r \geqslant 4$, then there exist a smooth curve $E$, a $4:1$ Galois cover $\gamma: E \to \mathbb{P}^1$ with Galois group $G=\mathbb{Z}/2\mathbb{Z} \times \mathbb{Z}/2\mathbb{Z}$ such that $\gamma$ is exactly ramified  over the $q_i$'s and the degrees of ramification are all equal to $2$.
\end{prop}

\begin{proof}
Since $r \geqslant 4$, we can find an elliptic curve $D$ and a $2:1$ cover $\alpha : D \to \mathbb{P}^1$ such that
$\alpha$ is ramified exactly over $q_1,q_2,q_3,q_4$.  Let $\alpha^{-1}(\{q_i\})=\{s_{i},t_{i}\}$ for $i > 4$ and let $\alpha^{-1}(\{q_i\})=\{s_i\}$ for $i=1,2$. 

If $r>4$, then $\mathscr{O}_{D}(2(r-4)s_1)$ is isomorphic to  $\mathscr{O}_{D}(\sum_{i>4}s_{i}+\sum_{i>4}t_{i})$. Thus we can construct a ramified $2:1$ cyclic cover of $E$, with respect to the line bundle $\mathscr{O}_{D}((r-4)s_1)$, $$\beta : E\to
D$$ such that $E$ is smooth and $\beta$ ramified exactly over $\{s_{i},t_{i}\ | \ i > 4 \}$  (see
\cite[Def. 2.50]{KM98}). 

If $r=4$, then $\mathscr{O}_{D}(2s_1-2s_2) \cong \mathscr{O}_{D}$ and
we can construct a $2:1$ cyclic cover of $E$, with respect to the non-trivial invertible sheaf $\mathscr{O}_{D}(s_1-s_2)$, $$\beta : E\to D$$ such that $E$ is a smooth elliptic curve and $\beta$ is \'etale.

Finally, in both of the case above, the composition $$\gamma = \alpha \circ \beta: E \to \mathbb{P}^1$$ is a $4:1$ cover which is exactly ramified  over the $q_i$'s and the degrees of ramification are all equal to $2$. Moreover $\mathbb{P}^1$ is the quotient $E/G$ where $G=\mathbb{Z}/2\mathbb{Z} \times \mathbb{Z}/2\mathbb{Z}$.
\end{proof}

\begin{rem}
What we want in the lemma above is to construct a finite morphism $\gamma:E\to \p^1$ which is exactly ramified over the $q_i$'s and all of the ramified degrees are equal to $2$. Note that the finite cover $\gamma$ we constructed above is of degree four and the one in \cite[Remark and Question 3.8]{GKP12} is of degree two. However, if $r$ is odd, then the Hurwitz's theorem (see \cite[Cor. IV.2.4]{Har77}) shows that it is not possible to have a $2:1$ cover which satisfies the condition.  
\end{rem}

Now we will prove Theorem \ref{quo-thm}.

\begin{proof}[{Proof of Theorem \ref{quo-thm}}]
Let $q_1,...,q_r$ be all of the points in $\p^1$ over which $p:X_f\to \p^1$ has multiple fibres. Let $\gamma:E\to \p^1$ be $4:1$ cover constructed in Lemma \ref{constr-curve}. Let $Z$ be the normalisation of the fibre product $X_f\times_{\p^1} E$. Let $q:Z\to E$ and $\Gamma_f:Z\to X_f$ be the natural projections. Then $\Gamma_f$ is \'etale over the smooth locus of $X_f$ and $q$ has only reduced fibres.

We know that we can reconstruct $X$ from $X_f$ (see \S \ref{Proof of Theorem 1.3}). Since $\Gamma_f: Z \to X_f$ is \'etale over the smooth locus of $X_f$, every
operation we do with $X_f$ can be done in the analogue way with $Z$. After the operations, the surface $Y\overset{g}{\longrightarrow} Z$ we obtained is just the normalisation of  $X\times_{\p^1} E$. We have a commutative diagram as below

\centerline{
\xymatrix{
Y \ar@/_1pc/[dd]_{\pi'} \ar[r]^{\Gamma} \ar[d]^g & X \ar@/^1pc/[dd]^{\pi} \ar[d]_f  \\
Z \ar[r]^{\Gamma_f} \ar[d]^q & X_f \ar[d]_p   \\
E \ar[r]^{\gamma} & \p^1
}
}

Then $\Gamma$ is \'etale over the smooth locus of $X$ and $X=Y/G$ where $G$ is the Galois group of $\gamma$. The sheaf $\Gamma_*(\Omega_Y^{[\otimes m]})$ is a $G$-sheaf on $X$ (the action of $G$ on $X$ is trivial) which is reflexive (see \cite[Prop. 1.7]{Har80}). Then $(\Gamma_*(\Omega_Y^{[\otimes m]}))^G$ is also reflexive (see \cite[Lem. B.4]{GKKP11}) and is isomorphic to $\Omega_X^{[\otimes m]}$ since $\Gamma$ is \'etale  over the smooth locus of $X$. Thus we have $$H^0(Y,(\Omega_{Y}^1)^{[\otimes m]})^G \cong H^0(X,(\Omega_{X}^1)^{[\otimes m]}).$$

Moreover, for any $m\geqslant 0$, the natural morphism  $$H^0(Y,(\Omega_{Y}^1)^{[\otimes m]}) \to H^0(Z,(\Omega_{Z}^1)^{[\otimes m]})$$ is an isomorphism by the same argument as in the proof of Lemma \ref{entire-ramifi}. Since every fibre of $q$ is reduced and general fibre of $q$ are smooth rational curves,  by the same argument as Lemma \ref{source}, we have
$$H^0(Y,(\Omega_{Y}^1)^{[\otimes m]}) \cong H^0(E,(\Omega_{E}^1)^{\otimes m}).$$

Hence we obtain isomorphisms $$H^0(X,(\Omega_{X}^1)^{[\otimes m]}) \cong H^0(Y,(\Omega_{Y}^1)^{[\otimes m]})^G \cong H^0(E,(\Omega_{E}^1)^{\otimes m})^G.$$
\end{proof}

Now we want to compute the dimension of $H^0(X,(\Omega_{X}^1)^{[\otimes m]})$ in function of multiple fibres of $X_f\to \p^1$ with the formula above. We will first prove the following lemma.

\begin{lemma}
\label{dir-im-ram}
Let $R_{\gamma}$ be the ramification divisor of the finite morphism $\gamma : E \to \mathbb{P}^1$, then
$(\gamma_*\mathscr{O}_E(R_{\gamma}))^G \cong \mathscr{O}_{\mathbb{P}^1}$. 
\end{lemma}

\begin{proof}
We have $H^0(U,(\gamma_*\mathscr{O}_E(R_{\gamma}))^G) \cong H^0(\gamma^{-1}(U), \mathscr{O}_E(R_{\gamma}))^G$ for any open
set $U\subseteq \mathbb{P}^1$. Let $\theta$ be a rational function on $E$ such that $\theta$ represents an non-zero element in $H^0(\gamma^{-1}(U), \mathscr{O}_E(R_{\gamma}))^G$. Since $\theta$ is $G$-invariant, it can also be regarded as a rational function on $U$. Since $\theta$ can only have simple poles at the support of $R$ on $\gamma^{-1}(U)$, it cannot have any pole on $U$. Thus $(\gamma_*\mathscr{O}_E(R_{\gamma}))^G \cong \mathscr{O}_{\mathbb{P}^1}$.  
\end{proof}

With the notation in the proof of Theorem \ref{quo-thm}, we have $$(\Omega_E^1)^{\otimes m} \cong (\gamma^*\mathscr{O}_{\mathbb{P}^1}(-2m+[\dfrac{m}{2}](q_1+\cdots +q_r))) \otimes
\mathscr{O}_E((m-2[\dfrac{m}{2}])R_{\gamma}).$$ By the projection formula, we have $\gamma_*(\Omega_E^1)^{\otimes m} \cong \mathscr{O}_{\mathbb{P}^1}(-2m+[\dfrac{m}{2}]r)\otimes
\gamma_*\mathscr{O}_E((m-2[\dfrac{m}{2}])R_{\gamma})$. By taking the $G$-invariant part, we obtain $$(\gamma_*(\Omega_E^1)^{\otimes m})^G \cong \mathscr{O}_{\mathbb{P}^1}(-2m+[\dfrac{m}{2}]r)\otimes (\gamma_*\mathscr{O}_E((m-2[\dfrac{m}{2}])R_{\gamma}))^G.$$ The lemma above implies that $(\gamma_*(\Omega_E^1)^{\otimes m})^G \cong (\mathscr{O}_{\mathbb{P}^1}(-2m+[\dfrac{m}{2}]r)).$ Hence  $$H^0(X,(\Omega_{X}^1)^{[\otimes m]})  \cong H^0(E,(\Omega_E^1)^{\otimes m})^G\cong H^0(\p^1,(\gamma_*(\Omega_E^1)^{\otimes m})^G) \cong H^0(\mathbb{P}^1,\mathscr{O}_{\mathbb{P}^1}(-2m+[\dfrac{m}{2}]r)).$$ We recover the same formula  as in Theorem \ref{iso-pluri-forms}

\begin{exmp}
\label{fin-exa}
We will give some examples. Let $h(m,r)$ be the dimension of $H^0(\mathbb{P}^1,\mathscr{O}_{\mathbb{P}^1}(-2m+[\dfrac{m}{2}]r))$. This is just the number  of $m$-pluri-forms as a function of the number $r$ of multiple fibres of $X_f\to\p^1$.

If $r=4$, then $h(m,4)=1$ if $m>0$ is even and $h(m,4)=0$ if $m$ is odd.

If $r=5$, then $h(2,5)=2$, $h(3,5)=0$ and   $h(m,5)>0$ if $m \geqslant 4$.

If $r \geqslant 6$, then $h(m,r)>0$ for $m\geqslant 2$.

\end{exmp}

\bibliographystyle{amsalpha}
\bibliography{bibliography}

\providecommand{\bysame}{\leavevmode\hbox to3em{\hrulefill}\thinspace}
\providecommand{\MR}{\relax\ifhmode\unskip\space\fi MR }
\providecommand{\MRhref}[2]{%
  \href{http://www.ams.org/mathscinet-getitem?mr=#1}{#2}
}
\providecommand{\href}[2]{#2}
\begin{thebibliography}{GKKP11}

\bibitem[AD12]{AD12}
Carolina Araujo and St\'ephane Druel, \emph{On codimension 1 del pezzo
  foliations on varieties with mild singularities}, Math. Ann., to appeare.
  Preprint {\tt arXiv:1210.4013}, 2012.

\bibitem[GHS03]{GHS03}
Tom Graber, Joe Harris, and Jason Starr, \emph{Families of rationally connected
  varieties}, Journal of the American Mathematical Society \textbf{16} (2003),
  no.~1, 57--67.

\bibitem[GKKP11]{GKKP11}
Daniel Greb, Stefan Kebekus, S{\'a}ndor~J. Kov{\'a}cs, and Thomas Peternell,
  \emph{Differential forms on log canonical spaces}, Publ. Math. Inst. Hautes
  \'Etudes Sci. (2011), no.~114, 87--169. \MR{2854859}

\bibitem[GKP12]{GKP12}
Daniel Greb, Stefan Kebekus, and Thomas Peternell, \emph{Reflexive differential
  forms on singular spaces -- geometry and cohomology}, J. reine angew. Math,
  published electronically on 09 Jan 2013, to appear in print. Preprint {\tt
  arXiv: 1202.3243v2}, 2012.

\bibitem[Gra12]{Gra12}
Patrick Graf, \emph{Bogomolov-sommese vanishing on log canonical pairs}, J.
  reine angew. Math., to appear. Preprint {\tt arXiv:1210.0421}, 2012.

\bibitem[Har77]{Har77}
Robin Hartshorne, \emph{Algebraic geometry}, Springer-Verlag, New York, 1977,
  Graduate Texts in Mathematics, No. 52.

\bibitem[Har80]{Har80}
\bysame, \emph{Stable reflexive sheaves}, Math. Ann. \textbf{254} (1980),
  no.~2, 121--176. \MR{597077 (82b:14011)}

\bibitem[HM07]{HM07}
Christopher~D. Hacon and James Mckernan, \emph{On {S}hokurov's rational
  connectedness conjecture}, Duke Math. J. \textbf{138} (2007), no.~1,
  119--136. \MR{2309156 (2008f:14030)}

\bibitem[KM98]{KM98}
J{\'a}nos Koll{\'a}r and Shigefumi Mori, \emph{Birational geometry of algebraic
  varieties}, Cambridge Tracts in Mathematics, vol. 134, Cambridge University
  Press, Cambridge, 1998, With the collaboration of C. H. Clemens and A. Corti,
  Translated from the 1998 Japanese original. \MR{1658959 (2000b:14018)}

\bibitem[KM99]{KM99}
Se{\'a}n Keel and James MacKernan, \emph{Rational curves on quasi-projective
  surfaces}, vol. 669, AMS Bookstore, 1999.

\bibitem[Kol96]{Kol96}
J{\'a}nos Koll{\'a}r, \emph{Rational curves on algebraic varieties}, Ergebnisse
  der Mathematik und ihrer Grenzgebiete. 3. Folge. A Series of Modern Surveys
  in Mathematics [Results in Mathematics and Related Areas. 3rd Series. A
  Series of Modern Surveys in Mathematics], vol.~32, Springer-Verlag, Berlin,
  1996. \MR{1440180 (98c:14001)}

\bibitem[Kol97]{Kol97}
\bysame, \emph{Singularities of pairs}, Algebraic geometry---Santa Cruz 1995,
  Proc. Sympos. Pure Math., vol.~62, AMS, 1997, pp.~221--287.

\end{thebibliography}

\end{document}